\newtheorem{thm}{Theorem}
\numberwithin{thm}{section}
\newtheorem{cor}[thm]{Corollary}
\newtheorem{lem}[thm]{Lemma}
\newtheorem{prop}[thm]{Proposition}
\newtheorem{rem}[thm]{Remark}
\newtheorem{ass}{Assumption}
\numberwithin{prob}{section}
\newcommand{\pwm}{f}
\newcommand{\nrmSpaceTime}{{L^p(\mathcal{I},\mathbb{R}^n)}}
\newcommand{\numteeth}{m}
\newcommand{\err}{\boldsymbol{\epsilon}}
\newcommand{\expon}{\mathrm{e}}
\newcommand{\bu}{\mbox{\boldmath$u$}}
\newcommand{\bv}{\mbox{\boldmath$v$}}
\newcommand{\bff}{\mbox{\boldmath$f$}}
\newcommand{\f}{\mbox{$f$}}
\newcommand{\bJJ}{\mbox{\boldmath$J$}}
\newcommand{\bU}{\mbox{\boldmath$U$}}
\newcommand{\bV}{\mbox{\boldmath$V$}}
\newcommand{\errtot}{\err}
\newcommand{\errffn}{\err_{{f,n}}}
\title{A New Parareal Algorithm for Problems with Discontinuous
	Sources}
\author{%
	Martin J. Gander\thanks{Section de Math\'{e}matiques, University of Geneva, 2-4 Rue du Li\`{e}vre,  CH-1211 Geneva, Switzerland.}
	\and
	Iryna Kulchytska-Ruchka\thanks{Institut für Theorie Elektromagnetischer Felder, 
		Technische Universität Darmstadt, Schlossgartenstrasse 8, D-64289 Darmstadt, Germany.}
	\and
	Innocent Niyonzima\thanks{Department of Civil Engineering and Engineering Mechanics, Columbia University, 500 West 120th Street, NY 10027 New York, USA.}
	\and
	Sebastian Schöps\footnotemark[2]}
\begin{document}
	
	\maketitle
	
	\begin{abstract}
		The Parareal algorithm allows to solve evolution problems exploiting
		parallelization in time. Its convergence and stability have been
		prove{d} under the assumption of regular (smooth) inputs. {We
			present and analyze here a new Parareal algorithm} for ordinary
		differential equations which involve discontinuous right-hand
		sides. Such situations occur in various applications, e.g., when an
		electric device is supplied with a pulse-width-modulated
		signal. {Our new Parareal algorithm} uses a smooth input for the
		coarse problem with reduced dynamics. {We derive} error estimates
		{that show} how the input reduction influences the overall convergence
		rate of the algorithm. {We support our theoretical results by}
		numerical experiments{, and also test our new Parareal algorithm in
			an eddy current simulation of} an induction machine.
		
	\end{abstract}
	
	\begin{keywords}
		Evolution problems, parallel-in-time solution, Parareal, ODEs with discontinuous inputs, convergence analysis
	\end{keywords}
	
	\begin{AMS}
		34A34, 34A36, 34A37, 65L20, 78M10
	\end{AMS}
	
	\section{Introduction} \label{section:introduction}
	
	Due to {the} increasing computational power of modern computer
	systems, scientists are nowadays able to solve complex physical
	problems, and parallel computers allow to reduce the time {to
	obtain the solution} further. The first and most {natural} approach
	{to solve evolution problems in parallel} is to perform parallel
	computations in space by domain decomposition, {see}
	\cite{Quarteroni_1999a, Toselli_2005a, Gander_2006a, Boubendir_2012a}
	{and references therein}. However, {when} space-parallelization is exploited up to saturation, {and more processors are still
		available,} parallel-in-time methods are considered to be a
	complementary approach to achieve further numerical speed-up{, see
		\cite{Gander_2015a} for an overview of such techniques}.
	
	The Parareal algorithm was introduced by Lions, Maday, and Turinici in
	\cite{Lions_2001aa}. It has become a powerful tool, which allows to
	solve time-dependent problems in a time-parallel fashion.  The method
	has been applied to a wide range of problems \cite{Nielsen_2012a}, in
	particular: linear and nonlinear parabolic problems \cite{Staff_2003a,
		LiuJiang_2012a}, molecular dynamics \cite{Baffico_2002a}, stochastic
	ordinary differential equations (ODEs) \cite{Bal_2003a,
		Engblom_2009a}, Navier-Stokes equations \cite{Trindade_2004a,
		Fischer_2005a}, quantum control problems \cite{Maday_2003a,
		Maday_2007a} and low-frequency problems in electrical engineering
	\cite{Schops_2018aa}.
	
	{The Parareal algorithm is based on a decomposition} of the time
	{domain of interest} into {non-overlapping} {time intervals}
	(e.g., one {time interval} per processor) and {the} parallel
	solution of the governing equation on each {time
		interval}. Exchange of information at synchronization points is
	based on the action of fine and coarse propagators. Starting from a
	prescribed initial {guess}, both operators solve the underlying
	problem over {each} time {interval} and return the solution at
	the end of the {time interval}. The fine propagator is accurate
	{and} computationally {expensive}. It can be, for example, a
	classical time integrator, which uses a very fine time
	discretization. On the other hand, the coarse propagator {is less
		accurate, but much less expensive than the fine propagator} (e.g.,
	via time stepping over a coarse partition). {The Parareal
		algorithm} corrects the {approximate solution iteratively} until
	convergence.
	
	{Several techniques} for reducing {the} computational cost of
	Parareal are discussed in \cite{Maday_2008a}. In particular, for the
	time domain solution of partial differential equations (PDEs), the use
	of a coarse {mesh also in space} for the coarse propagator is
	proposed. This approach can be used within a multiscale setting
	\cite{Astorino_2012a, Chouly_2014a} or with spatial averaging
	operators \cite{Barenblatt_1998a}. A second idea is to perform model
	order reduction (MOR) for {the} extraction of a coarse propagator
	from the fine problem. Further reduced order techniques, developed
	{in} \cite{Chen_2014a, He_2010a}, involve spatial MOR also for the
	coarse problem.
	
	{These ideas} help to reduce {the cost of Parareal} by
	simplifying the coarse model in space. In this paper we propose to use
	a simpler coarse problem with respect to {the} time variable,
	similar to \cite{Haut_2014a}, where Parareal was applied to PDEs which
	exhibit scale separation in time. Our method is {specific for}
	problems {involving} discontinuous or multirate excitations, e.g.,
	pulse-width-modulated signals (PWM){, an example of which is shown}
	in Figure~\ref{fig:fig_motor_voltages2},
	\begin{figure}[t]
		\centering
		\includegraphics[width=0.5\linewidth]{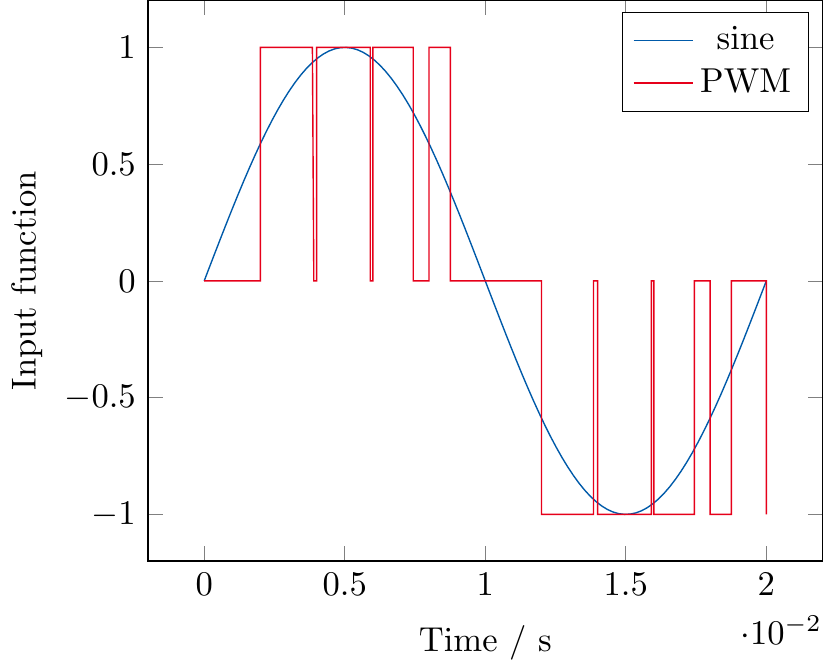}
		\caption{PWM signal with switching frequency of $f_\mathrm{s}=500$
			Hz, generating a sine wave of 50 Hz.}
		\label{fig:fig_motor_voltages2}
	\end{figure}
	or multiharmonic signals. Its main idea is to supply the coarse
	propagator with a smooth input, which features reduced dynamics, e.g.,
	a periodic waveform, which consists of the fundamental frequency
	only. For instance, in case of the PWM signal containing $10$ pulses
	on {the} time interval $[0, 0.02]\ s$, one could choose a sine wave
	of $50$ Hz to be the smooth input, {also shown} in
	Figure~\ref{fig:fig_motor_voltages2}. This allows the coarse
	propagator to use larger time steps and a high-order method.
	
	{Our} paper is organized as follows. The problem setting is
	described in Section~\ref{section:problem}. The {original} Parareal
	{algorithm} for a system of nonlinear ODEs, together with its error
	estimate from \cite{Gander_2008a} are recalled in
	Section~\ref{section:Parareal}.  {In}
	Section~\ref{section:Parareal_analysis_ode}{, we present our new
		Parareal algorithm for} a subclass of Carath\'{e}odory equations --
	equations, whose inputs may contain discontinuities with respect to
	the time variable{, and we derive a sharp convergence estimate
		using techniques developed in \cite{Gander_2008a}. We then measure
		the convergence rate of our new Parareal algorithm} numerically in
	Section~\ref{section:numerical_test} for an RL-circuit model{, and
		observe {a} very good agreement with our theoretical estimates. {In
			Section~\ref{Sec:Application}, we test the new Parareal algorithm
			applied to an eddy current} simulation of an induction machine. We
		finally present our conclusions} in
	Section~\ref{section:conclusions}.
	
	\section{Problem setting} \label{section:problem}
	
	{We} consider a nonlinear initial value problem (IVP) of
	non-auto\-nomous ODEs {of the form}
	\begin{align} 
	\bu^{\prime}(t) &= \bff(t, \bu(t)), \quad t \in \mathcal{I},\label{eq:section_Parareal_1_ODE}
	\\
	\bu(0) &= \bu_0,
	\label{eq:section_Parareal_1}
	\end{align}
	with right-hand side (RHS) $\bff: \mathcal{I} \times \mathbb{R}^n \to
	\mathbb{R}^n$ and solution $\bu: \mathcal{I} \to \mathbb{R}^n$ on the
	time interval $\mathcal{I} := (0, T]$. We are interested in problems
	for which the non-smooth (or even discontinuous) excitation can be
	separated from the smooth part of the RHS{, i.e.{,}}
	\begin{equation}
	{\bff}(t, \bu(t)) := \bar{\bff}(t, \bu(t)) + \tilde{\bff}(t),
	\label{eq:section_Parareal_3}
	\end{equation}
	where $\bar{\bff}(t, \bu(t))$ and $\tilde{\bff}(t)$ satisfy
	{the following two assumptions:}
	\begin{ass}\label{ass:smooth_mean_rhs}%
		{The} function $\bar{\bff}$ in \eqref{eq:section_Parareal_3}
		{is bounded and} sufficiently smooth in both arguments{, and
			it is} Lipschitz in the second argument {with
			Lipschitz constant} $L$.
	\end{ass}
	\begin{ass}\label{ass:small_perturb}%
		{The function} $\tilde{\bff}$ {in
			\eqref{eq:section_Parareal_3}} {belongs to}
		$\boldsymbol{L}^p(\mathcal{I},\mathbb{R}^n)$, $p\geq1,$ {with }{its norm given }{by $C_{p}:=\|\tilde{\bff}\|_\nrmSpaceTime$.}
	\end{ass}
	{Clearly,} the total RHS ${\bff}$ {has no continuity or
		smoothness} properties{, and therefore the Lindel\"{o}f} theory
	for existence and uniqueness of solution{s can not be applied} to
	\eqref{eq:section_Parareal_1_ODE}-\eqref{eq:section_Parareal_1}.
	However, one can {use the} solvability and uniqueness theory for
	Carath\'{e}odory equations, which can be found, e.g.,
	in~\cite{Filippov_2013a}.
	We recall that \eqref{eq:section_Parareal_1_ODE} is called a
	Carath\'{e}odory equation if its RHS $\bff(t,\bu)$ satisfies {the
		so called} Carath\'{e}odory conditions:
	\begin{enumerate}[a)]
		\item $\bff{(t,\bu)}$ is defined and continuous in $\bu$ for
		almost all $t$;
		\item $\bff{(t,\bu)}$ is measurable in $t$ for each $\bu$;
		\item $\|\bff(t, \bu)\| \leq m(t)$, with $m$ being a summable function on $\mathcal{I}$. 
	\end{enumerate}
	{It was proved in \cite{Filippov_2013a} that there exists a
		solution} to
	\eqref{eq:section_Parareal_1_ODE}-\eqref{eq:section_Parareal_1}, if
	$\bff(t,\bu)$ satisfies {the} Carath\'{e}odory conditions
	a)-c). Furthermore, if there exists a summable function $l(t)$
	s.t. $\forall (t,{\bv})$ and $\forall(t,\bu),$ with $t\in\mathcal{I}$
	\begin{equation}
	\label{uniqueness}
	\| \bff(t, \bu) - \bff(t, \bv) \|\leq l(t) \|\bu - \bv\|,
	\end{equation}
	then the solution is unique. {Note that}
	Assumptions~\ref{ass:smooth_mean_rhs} and \ref{ass:small_perturb}
	imply that {the} Carath\'{e}odory conditions and \eqref{uniqueness}
	are satisfied{, and hence} there exists a unique solution to
	\eqref{eq:section_Parareal_1_ODE}-\eqref{eq:section_Parareal_1}.
	
	\section{{Original} Parareal algorithm and convergence for smooth right-hand sides} \label{section:Parareal}
	
	We now {recall the original Parareal algorithm from
		\cite{Lions_2001aa} in the form described in \cite{Gander_2007a} for
		solving
		\eqref{eq:section_Parareal_1_ODE}-\eqref{eq:section_Parareal_1}.}
	The initial step of the algorithm consists {in} partitioning the
	time {domain} $(0,T]$ into {non-overlapping time} intervals
	$(T_{n-1}, T_{n}],$ $n = 1,\dots,N$ with $0 = T_0 < T_1 < T_2 <
	\ldots < T_N = T$. One can then define an evolution problem on
	each time {interval},
	\begin{align}
	&\bu_n^{\prime}(t) = \bff(t, \bu_n(t)), \quad t \in (T_{n-1}, T_{n}], \label{eq:section_Parareal_3_a_ODE}
	\\
	&\bu_n(T_{n-1}) = \bU_{n-1}
	\label{eq:section_Parareal_3_a}
	\end{align}
	for $n= 1,\dots,N$. {The initial} values $\bU_{n-1},$ $n =
	1,\dots,N$ need to be determined {such that the} solutions on each
	{time interval} $(T_{n-1}, T_{n}]$ coincide with {the}
	restriction of the solution of
	\eqref{eq:section_Parareal_1_ODE}-\eqref{eq:section_Parareal_1} to
	that {time interval}. {The Parareal algorithm computes by
		iteration better and better approximations of} these initial
	conditions: for {a given initial guess ${\bU_n^{{(0)}}}$,
		$n=0,\dots,N$, it solves for $k=0,1,\dots,K$}
	\begin{align}
	{{\bU_0^{(k+1)}}}&=\bu_0,\label{eq:section_Parareal_3_b_init}\\
	{{\bU_n^{(k+1)}}}&=\mathcal{F}\big(T_n, T_{n-1},{\bU^{(k)}_{n-1}}\big)
	+\mathcal{G}\big(T_n, T_{n-1},{\bU^{(k+1)}_{n-1}}\big) - \mathcal{G}\big(T_n, T_{n-1},\bU^{(k)}_{n-1}\big),{\quad n=1,\dots,N}.
	\label{eq:section_Parareal_3_b}
	\end{align}
	In \eqref{eq:section_Parareal_3_b} we denote by
	$\mathcal{F}(t,T_{n-1},\bU_{n-1})$ and
	$\mathcal{G}(t,T_{n-1},\bU_{n-1})$ the numerical solution
	{propagators} of the IVP
	\eqref{eq:section_Parareal_3_a_ODE}-\eqref{eq:section_Parareal_3_a}.
	Both of them propagate {the} initial value $\bU_{n-1}$ in time on
	$(T_{n-1},T_{n}]${, but they} differ in accuracy: {the fine
		propagator} $\mathcal{F}$ gives a very accurate, {but}
	expensive {approximate solution to} the IVP, whereas {the
		coarse propagator} $\mathcal{G}$ {gives an inexpensive, but}
	less accurate solution. The first term of the RHS in
	\eqref{eq:section_Parareal_3_b} involves quantities, which are
	already known at the iteration $k+1$ and, therefore, can be computed
	in parallel. The last one is known as well, since it has been
	already computed at the previous iteration. The term
	$\mathcal{G}\big(T_n, T_{n-1},{\bU^{(k+1)}_{n-1}}\big)$ involves the
	approximation $\bU_{n-1}^{(k+1)}$, $n=1,\dots,N$ which has not
	{yet been obtained at} the beginning of {the} iteration
	$k+1$. Therefore, its calculation cannot be parallelized and the
	{coarse but inexpensive} propagator $\mathcal{G}$ is applied
	sequentially.
	
	{We now state} the convergence result for problems with smooth RHS
	$\bff$, {which was proved in \cite{Gander_2008a} under the
		assumption that} each {time interval} has the same length $\Delta
	T = T/N$.
	\begin{thm}\label{thm:section_Parareal_3}
		Let the RHS $\bff$ be smooth enough and assume {that}
		$\mathcal{F}\big(T_n, T_{n-1},\bU^{(k)}_{n-1}\big)$ is the exact
		solution to
		\eqref{eq:section_Parareal_3_a_ODE}-\eqref{eq:section_Parareal_3_a}
		at $T_n$ with initial value $\bU^{(k)}_{n-1}.$ Furthermore,
		\begin{itemize}
			\item let $\mathcal{G}\big(T_n, T_{n-1},\mathbf{U}^{(k)}_{n-1}\big)$
			be an approximate solution with local truncation error bounded by
			$C_3\Delta T^{p+1},$ which can be expanded for $\Delta T$ small as
			\begin{equation}\label{eq:section_Parareal_4_a}
			\mathcal{F}(T_n, T_{n-1}, \bU) - \mathcal{G}(T_n, T_{n-1}, \bU) = 
			c_{l+1}(\bU) \Delta T^{l+1} + c_{l+2}(\bU) \Delta T^{l+2} +\dots
			\end{equation}
			with {an initial value $\bU$ and} continuously
			differentiable functions $c_{i},\ i=l+1,l+2,\dots$;
			\item assume {that} $\mathcal{G}$ satisfies the Lipschitz
			condition
			\begin{equation}\label{eq:section_Parareal_4_b}
			\|\mathcal{G}\big(t+\Delta T, t, \bU\big)-\mathcal{G}\big(t+\Delta T, t, \bV\big)\|\leq(1+C_2\Delta T)\|\bU-\bV\|
			\end{equation}
			for $t\in\mathcal{I}$ and for all $\bU, \bV$, with constant
			$C_2$.
		\end{itemize}
		Then at iteration $k$ of the Parareal algorithm
		\eqref{eq:section_Parareal_3_b_init}-\eqref{eq:section_Parareal_3_b}
		we have the {error} bound
		\begin{equation}\label{eq:section_Parareal_5}
		||\bu(T_n)-\bU_n^{(k)}|| 
		\leq 
		\frac{C_3}{C_1} \frac{(C_1 \Delta T^{l+1})^{k+1}}{(k+1)!}(1+C_2\Delta T)^{n-k-1}\prod_{j=0}^k (n-j),
		\end{equation}
		where the constant $C_1$ comes from the expansion
		\eqref{eq:section_Parareal_4_a} and {the} Lipschitz continuity of
		$c_{i}$, $i=~l+1,l+2,\dots${, see the proof in \cite{Gander_2008a}.}
	\end{thm}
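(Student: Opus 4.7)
My plan is to derive a one-step recurrence on the Parareal error $e_n^{(k)} := \|\bu(T_n) - \bU_n^{(k)}\|$, and then unwrap it into the closed-form bound. I would start from the fact that, since $\mathcal{F}$ is assumed to give the exact solution on $(T_{n-1},T_n]$, the true solution satisfies $\bu(T_n) = \mathcal{F}(T_n, T_{n-1}, \bu(T_{n-1}))$. Subtracting the update formula \eqref{eq:section_Parareal_3_b} from this identity, I would rewrite the difference by adding and subtracting $\mathcal{G}(T_n,T_{n-1},\bu(T_{n-1}))$, splitting the error into two pieces: a difference of the ``$\mathcal{F}-\mathcal{G}$'' operator evaluated at $\bu(T_{n-1})$ versus $\bU_{n-1}^{(k)}$, plus a pure $\mathcal{G}$-difference evaluated at $\bu(T_{n-1})$ versus $\bU_{n-1}^{(k+1)}$.

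The first piece is the place where the expansion \eqref{eq:section_Parareal_4_a} pays off: writing $\mathcal{F}-\mathcal{G} = \sum_{i\geq l+1} c_i(\cdot)\Delta T^i$ and using the assumed Lipschitz/continuous-differentiability of each $c_i$, one obtains an estimate of the form
\begin{equation*}
\bigl\|[\mathcal{F}(T_n,T_{n-1},\bu(T_{n-1}))-\mathcal{F}(T_n,T_{n-1},\bU_{n-1}^{(k)})]-[\mathcal{G}(T_n,T_{n-1},\bu(T_{n-1}))-\mathcal{G}(T_n,T_{n-1},\bU_{n-1}^{(k)})]\bigr\| \leq C_1 \Delta T^{l+1}\, e_{n-1}^{(k)},
\end{equation*}
with $C_1$ absorbing the Lipschitz constants of the $c_i$ and the higher-order $\Delta T$ terms (this is where the constant $C_1$ in the statement comes from). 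The second piece is controlled directly by the assumed Lipschitz bound \eqref{eq:section_Parareal_4_b}, yielding $(1+C_2\Delta T)\,e_{n-1}^{(k+1)}$. Initialization of the recurrence at level $k=0$ uses the local truncation error bound $C_3\Delta T^{p+1}$ on $\mathcal{G}$ (accumulated over $n$ coarse steps), giving the factor $C_3/C_1$ in the final estimate.

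Combining the two pieces produces the two-index recurrence
\begin{equation*}
e_n^{(k+1)} \leq C_1 \Delta T^{l+1}\, e_{n-1}^{(k)} + (1+C_2\Delta T)\, e_{n-1}^{(k+1)}, \qquad e_0^{(k)}=0.
\end{equation*}
The last step is to solve this recurrence. I would do this using a generating-function approach: introduce $\rho_k(\zeta) := \sum_{n\geq 0} e_n^{(k)}\zeta^n$ and show that the recurrence becomes $\rho_{k+1}(\zeta) \leq \frac{C_1\Delta T^{l+1}\,\zeta}{1-(1+C_2\Delta T)\zeta}\,\rho_k(\zeta)$, so that by induction $\rho_k(\zeta)$ is bounded by a $k$-fold product of such rational factors applied to $\rho_0(\zeta)$. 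Expanding the resulting expression by the binomial/Cauchy-product formula and reading off the coefficient of $\zeta^n$ produces exactly the $\frac{1}{(k+1)!}\prod_{j=0}^k(n-j)$ combinatorial factor together with the powers of $C_1\Delta T^{l+1}$ and $(1+C_2\Delta T)^{n-k-1}$.

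The main technical obstacle is the third step: controlling the ``$\mathcal{F}-\mathcal{G}$'' difference cleanly so that the infinite tail of the expansion \eqref{eq:section_Parareal_4_a} collapses into a single constant $C_1$ and a single power $\Delta T^{l+1}$, which requires both the boundedness of the iterates (to evaluate the $c_i$ on a compact set) and the Lipschitz continuity of the coefficient functions $c_i$. The solution of the double recurrence is routine once this is in hand, and the full argument is carried out in \cite{Gander_2008a}.
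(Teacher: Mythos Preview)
Your proposal is correct and follows essentially the same approach as the paper, which in turn defers to \cite{Gander_2008a}: derive the two-index recurrence $e_n^{(k+1)} \leq C_1\Delta T^{l+1} e_{n-1}^{(k)} + (1+C_2\Delta T) e_{n-1}^{(k+1)}$ by adding and subtracting $\mathcal{G}(T_n,T_{n-1},\bu(T_{n-1}))$, bound the $k=0$ level via the local truncation error, and then unwind the recurrence with generating functions. The paper's proof of the analogous Theorem~\ref{thm:section_Parareal_5} makes this structure explicit and then simply invokes \cite{Gander_2008a} for the final generating-function step, exactly as you outline.
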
 
	
	\section{{A new Parareal algorithm for} non-smooth sources}\label{section:Parareal_analysis_ode}
	
	We now omit the assumption of smoothness {on} the RHS and allow
	discontinuities in the time-dependent input $\tilde{\bff}${, 
		considering the} IVP
	\eqref{eq:section_Parareal_1_ODE}-\eqref{eq:section_Parareal_1} with
	$\bff$ as in \eqref{eq:section_Parareal_3} such that {only}
	Assumptions~\ref{ass:smooth_mean_rhs} and \ref{ass:small_perturb} are
	{satisfied}.
	
	When one deals with a highly oscillatory or discontinuous source, the
	coarse propagator $\mathcal{G}$ might not capture its dynamics if low
	accuracy, i.e., big time steps are used. {This} may lead to solving
	a coarse problem, which does not contain enough information about the
	original input{, and} it is not clear how {this influences} the
	overall convergence of the Parareal algorithm. For this reason, we
	propose to define a smooth input, which is appropriate for coarse
	discretization. {Therefore, in our new Parareal algorithm, the
		coarse propagator solves} the modified problem with reduced dynamics
	\begin{align} 
	\bu^{\prime}(t) &= \bar{\bff}(t, \bu(t)), \quad t \in \mathcal{I},\label{eq:section_Parareal_6_ODE}
	\\
	\bu(0) &= \bu_0,
	\label{eq:section_Parareal_6}
	\end{align}
	while the fine propagator
	$\mathcal{F}$ is still applied to the original problem
	\eqref{eq:section_Parareal_1_ODE}-\eqref{eq:section_Parareal_1}. In
	particular, the coarse propagator $\bar{\mathcal{G}}$ on the time
	{interval} $(T_{n-1}, T_{n}]$ for $n=1,\dots,N$ solves
	\begin{align}
	&\bu_n^{\prime}(t) = \bar{\bff}(t, \bu_n(t)), \quad t \in (T_{n-1}, T_{n}], \label{eq:section_Parareal_3_a_reduced_ODE}
	\\
	&\bu_n(T_{n-1}) = \bU_{n-1}.
	\label{eq:section_Parareal_3_a_reduced}
	\end{align}
	Our {new Parareal algorithm then computes} for $k=0,1,\dots,K$ and
	$\ n=1,\dots,N$
	\begin{align}
	{{\bU_0^{(k+1)}}}&
	=\bu_0,\label{eq:section_Parareal_3_b_init_reduced}\\
	{{\bU_n^{(k+1)}}}&=\mathcal{F}\big(T_n, T_{n-1},{\bU^{(k)}_{n-1}}\big)
	+\bar{\mathcal{G}}\big(T_n, T_{n-1},{\bU^{(k+1)}_{n-1}}\big) - \bar{\mathcal{G}}\big(T_n, T_{n-1},\bU^{(k)}_{n-1}\big).
	\label{eq:section_Parareal_3_b_reduced}
	\end{align}
	The initial approximation can be calculated {using the coarse
		propagator},
	\begin{equation}
	\label{init_approx_reduced}
	\bU_n^{(0)}:=\bar{\mathcal{G}}\big(T_n, T_{n-1},{\bU^{(0)}_{n-1}}\big),\quad n=1,\dots,N.
	\end{equation}
	For a given initial value $\bU$, we define the difference between
	{the} exact solution of \eqref{eq:section_Parareal_3_a_ODE} and the
	numerical solution of the reduced coarse problem
	\eqref{eq:section_Parareal_3_a_reduced_ODE} as
	\begin{equation}\label{eq:section_Parareal_7}
	\errtot_n(T_n,\bU){:}=\mathcal{F}(T_n, T_{n-1}, \bU) - \bar{\mathcal{G}}(T_n, T_{n-1}, \bU).
	\end{equation}
	{For analysis purposes, we also} introduce an additional propagator
	$\bar{\mathcal{F}}$, which, as $\bar{\mathcal{G}}$, solves
	\eqref{eq:section_Parareal_6_ODE}-\eqref{eq:section_Parareal_6}, but
	is exact. {We can then} express the error $\errtot_n$ as
	\begin{align}\label{errff}
	\errtot_{n}(T_n,\bU)&=\underbrace{\mathcal{F}(T_n, T_{n-1}, \bU) - \bar{\mathcal{F}}(T_n, T_{n-1}, \bU)}_{=:\errffn(T_n)}
	+ \bar{\mathcal{F}}(T_n, T_{n-1}, \bU) - \bar{\mathcal{G}}(T_n, T_{n-1}, \bU).
	\end{align}
	We {now show} that the error $\errffn$ between the
	solution of the original ODE \eqref{eq:section_Parareal_3_a_ODE} and
	{the solution} of the reduced {ODE}
	\eqref{eq:section_Parareal_3_a_reduced_ODE} with initial value $\bU$
	at $T_{n-1}$ does not depend on $\bU$.
	\begin{prop}
		{If} Assumptions~\ref{ass:smooth_mean_rhs} and
		\ref{ass:small_perturb} hold, then the error $\errffn$ {from
			\eqref{errff} solves} the IVP
		\begin{equation}\label{eq:section_analysis_Parareal_10}
		\begin{aligned}
		&\errffn^{\prime}(t) = \bJJ(t, \errffn(t))\errffn(t) + \tilde{\bff}(t), \quad t \in (T_{n-1},T_n],\\  
		&\errffn(T_{n-1}) = 0,
		\end{aligned}
		\end{equation}
		where $\bJJ(t, \errffn(t))$ is defined in \cite{Dahlquist_2007aa} as
		the neighborhood average of the Jacobian, given by
		\begin{equation}\label{eq:section_analysis_Parareal_11}
		\bJJ(t, \errffn(t)) = \int_{0}^{1} \displaystyle \frac{\partial \bar{\bff}}{\partial \bu} 
		\left( t, \bar{\bu}(t) + \theta \errffn(t)\right) \mathrm{d}\theta.
		\end{equation}
	\end{prop}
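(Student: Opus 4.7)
The plan is to identify $\errffn(t)$ as the difference of two exact trajectories and then use the integral form of the mean value theorem on the smooth part $\bar{\bff}$. Write $\bu(t):=\mathcal{F}(t,T_{n-1},\bU)$ for the exact solution of the original ODE \eqref{eq:section_Parareal_3_a_ODE} on $(T_{n-1},T_n]$, and $\bar{\bu}(t):=\bar{\mathcal{F}}(t,T_{n-1},\bU)$ for the exact solution of the reduced ODE \eqref{eq:section_Parareal_3_a_reduced_ODE} on the same interval, both launched from $\bU$ at $T_{n-1}$. By construction $\errffn(t)=\bu(t)-\bar{\bu}(t)$, so the initial condition $\errffn(T_{n-1})=0$ is immediate.

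Next, I would differentiate $\errffn$. The function $\bar{\bu}$ is classically $C^1$ by Assumption~\ref{ass:smooth_mean_rhs}, while $\bu$ is only a Carath\'{e}odory solution of \eqref{eq:section_Parareal_3_a_ODE}, hence absolutely continuous and differentiable almost everywhere on $(T_{n-1},T_n]$. Subtracting the two ODEs yields, for a.e.~$t$,
\begin{equation*}
\errffn^{\prime}(t)=\bar{\bff}(t,\bu(t))+\tilde{\bff}(t)-\bar{\bff}(t,\bar{\bu}(t)).
\end{equation*}
The remaining task is therefore to recast $\bar{\bff}(t,\bu(t))-\bar{\bff}(t,\bar{\bu}(t))$ as $\bJJ(t,\errffn(t))\,\errffn(t)$.

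This is where the integral form of the mean value theorem enters. Since $\bar{\bff}(t,\cdot)$ is smooth in its second argument, for a.e.~fixed $t$ the function $\theta\mapsto \bar{\bff}(t,\bar{\bu}(t)+\theta\errffn(t))$ is $C^1$ on $[0,1]$, and the fundamental theorem of calculus gives
\begin{equation*}
\bar{\bff}(t,\bu(t))-\bar{\bff}(t,\bar{\bu}(t))
=\int_{0}^{1}\frac{\partial\bar{\bff}}{\partial\bu}\bigl(t,\bar{\bu}(t)+\theta\errffn(t)\bigr)\,\diff\theta\cdot\bigl(\bu(t)-\bar{\bu}(t)\bigr),
\end{equation*}
which is exactly $\bJJ(t,\errffn(t))\,\errffn(t)$ by definition \eqref{eq:section_analysis_Parareal_11}. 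Combining this with the previous display yields the claimed IVP \eqref{eq:section_analysis_Parareal_10}.

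There is no real obstacle in the argument; the only point requiring minor care is the regularity mismatch between $\bu$ and $\bar{\bu}$. The smoothness of $\bar{\bff}$ in $\bu$ together with the boundedness of $\bar{\bu}$ and $\bu$ on the finite interval $(T_{n-1},T_n]$ ensures that the integrand defining $\bJJ$ is continuous in $\theta$, so the matrix-valued map $t\mapsto\bJJ(t,\errffn(t))$ is well-defined. Independence from the initial value $\bU$, which is the informal statement preceding the proposition, then follows because the equation \eqref{eq:section_analysis_Parareal_10} and its initial condition involve only $\tilde{\bff}$ and the Jacobian kernel, and the value $\bU$ enters only implicitly through $\bar{\bu}$ inside $\bJJ$; in the subsequent estimates this dependence will be absorbed into a uniform bound coming from Assumption~\ref{ass:smooth_mean_rhs}.
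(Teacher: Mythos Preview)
Your proof is correct and follows essentially the same approach as the paper: subtract the two ODEs, then apply the fundamental theorem of calculus along the segment $\theta\mapsto\bar{\bu}(t)+\theta\errffn(t)$ to express $\bar{\bff}(t,\bu(t))-\bar{\bff}(t,\bar{\bu}(t))$ as $\bJJ(t,\errffn(t))\errffn(t)$. Your added remarks on Carath\'{e}odory regularity and on the residual dependence of $\bJJ$ on $\bU$ through $\bar{\bu}$ are accurate refinements of points the paper leaves implicit.
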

	\begin{proof}
		Let $\bu_{n}$ and $\bar{\bu}_{n}$ solve
		\eqref{eq:section_Parareal_3_a_ODE}-\eqref{eq:section_Parareal_3_a}
		and
		\eqref{eq:section_Parareal_3_a_reduced_ODE}-\eqref{eq:section_Parareal_3_a_reduced},
		respectively. The error $\errffn$ on $[T_{n-1},T_n]$ is then defined
		as the difference $\errffn{:}=\bu_{n}-\bar{\bu}_{n}$.  Subtracting
		equation \eqref{eq:section_Parareal_3_a_reduced_ODE} from
		\eqref{eq:section_Parareal_3_a_ODE} and initial condition
		\eqref{eq:section_Parareal_3_a_reduced} from
		\eqref{eq:section_Parareal_3_a} we obtain
		\begin{equation}\label{eq:section_analysis_Parareal_10_intermediate}
		\begin{aligned}
		&\errffn^{\prime}(t) = \bar{\bff} \left(t, \bar{\bu}_{n}(t) + \errffn(t)\right) - \bar{\bff} \left( t, \bar{\bu}_{n}(t)\right) + \tilde{\bff}(t), \quad t \in (T_{n-1},T_n],
		\\  
		&\errffn(T_{n-1}) = 0.
		\end{aligned}
		\end{equation}
		Using the {fundamental theorem of calculus, we get}
		\begin{align}
		&\bar{\bff} \left( t, \bar{\bu}_{n}(t) + \errffn(t)\right) - \bar{\bff} \left(t, \bar{\bu}_{n}(t)\right)
		= \int_{0}^{1} \displaystyle \frac{\partial \bar{\bff}}{\partial \theta} \left( t, \bar{\bu}_{n}(t) + \theta \errffn(t)\right) \mathrm{d}\theta \\
		&= \int_{0}^{1} \displaystyle \frac{\partial \bar{\bff}}{\partial \bu} \left( t, \bar{\bu}_{n}(t) + \theta \errffn(t)\right) \errffn(t) \mathrm{d}\theta =: \bJJ(t, \errffn(t))\errffn(t),
		\label{eq:section_analysis_Parareal_12}
		\end{align}
		which leads to \eqref{eq:section_analysis_Parareal_10}.
	\end{proof}
	\begin{rem}
		We note that the IVP \eqref{eq:section_analysis_Parareal_10} is
		again well-defined in the sense of Carath\'{e}odory theory.
	\end{rem}
	In the following lemma we derive {a} bound for the error
	$\errffn(T_n),$ solution to \eqref{eq:section_analysis_Parareal_10}.
	\begin{lem}\label{lemma:bound_errfn}
		Let Assumptions~\ref{ass:smooth_mean_rhs} and
		\ref{ass:small_perturb} hold, and let the {time interval} length
		$\Delta T = T/N$ be small. Then there exists $C_4>0$ s.t. the
		solution to \eqref{eq:section_analysis_Parareal_10} can be bounded
		at $T_{n}$ {by}
		\begin{equation}\label{bound_errffn+DeltaT}
		\|\errffn(T_n)\|\leq C_4C_{p} \Delta T^{1/q},
		\end{equation}
		where the integer $q\geq1$ is defined {by the relation}
		$1/p+1/q=1${, and
			{$C_{p}:=\|\tilde{\bff}\|_{\nrmSpaceTime}$} %
			{is} from Assumption \ref{ass:small_perturb}.}
	\end{lem}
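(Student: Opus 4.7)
The plan is to rewrite \eqref{eq:section_analysis_Parareal_10} as an integral equation, apply Hölder's inequality to the $\tilde{\bff}$ source term to extract the $\Delta T^{1/q}$ factor, and then close the argument with a Grönwall estimate. Concretely, I would begin by integrating \eqref{eq:section_analysis_Parareal_10} from $T_{n-1}$ to $t\in(T_{n-1},T_n]$ and using $\errffn(T_{n-1})=0$ to obtain
\begin{equation*}
\errffn(t) \;=\; \int_{T_{n-1}}^{t} \bJJ(s,\errffn(s))\,\errffn(s)\,\diff s \;+\; \int_{T_{n-1}}^{t} \tilde{\bff}(s)\,\diff s.
\end{equation*}

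Next I would bound each term of the right-hand side in norm. From Assumption~\ref{ass:smooth_mean_rhs}, $\bar{\bff}$ is Lipschitz in the second argument with constant $L$, so $\|\partial\bar{\bff}/\partial\bu\|\le L$ almost everywhere, and thus the neighborhood-average Jacobian $\bJJ$ from \eqref{eq:section_analysis_Parareal_11} satisfies $\|\bJJ(s,\errffn(s))\|\le L$. For the inhomogeneous term I would invoke Hölder's inequality with conjugate exponents $p,q$:
\begin{equation*}
\left\|\int_{T_{n-1}}^{t} \tilde{\bff}(s)\,\diff s\right\|
\;\le\; \int_{T_{n-1}}^{T_n} \|\tilde{\bff}(s)\|\,\diff s
\;\le\; \|\tilde{\bff}\|_{\boldsymbol{L}^p(T_{n-1},T_n)}\,\Delta T^{1/q}
\;\le\; C_{p}\,\Delta T^{1/q},
\end{equation*}
using Assumption~\ref{ass:small_perturb}. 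Combining the two pieces gives the scalar integral inequality
\begin{equation*}
\|\errffn(t)\| \;\le\; C_{p}\,\Delta T^{1/q} \;+\; L\int_{T_{n-1}}^{t} \|\errffn(s)\|\,\diff s,\qquad t\in[T_{n-1},T_n].
\end{equation*}

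Finally, I would apply the (integral form of) Grönwall's lemma, which is available here because the right-hand side of \eqref{eq:section_analysis_Parareal_10} is Carathéodory, yielding
\begin{equation*}
\|\errffn(T_n)\| \;\le\; C_{p}\,\Delta T^{1/q}\,\expon^{L\Delta T}.
\end{equation*}
Since $\Delta T$ is assumed small (equivalently $\Delta T\le T$), setting $C_4:=\expon^{LT}$ gives the claimed estimate \eqref{bound_errffn+DeltaT}.

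The only place where care is required is the Hölder step: this is precisely where the non-smoothness of $\tilde{\bff}$ is converted into a fractional power of $\Delta T$, and the appearance of $1/q=1-1/p$ rather than a full power of $\Delta T$ is exactly the reason the convergence of the new Parareal algorithm will differ from the classical smooth-source result recalled in Theorem~\ref{thm:section_Parareal_3}. Everything else is a standard Grönwall/Lipschitz manipulation; the sharpness of the exponent $1/q$ is the main conceptual point to emphasise.
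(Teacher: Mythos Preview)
Your proof is correct and follows essentially the same route as the paper: both arguments combine the Lipschitz bound on $\bar{\bff}$ with a Grönwall-type estimate and then extract the $\Delta T^{1/q}$ factor via Hölder's inequality. The only cosmetic difference is that the paper invokes Theorem~10.2 of \cite{Hairer_2000a} (a ready-made perturbation bound) and applies Hölder to the resulting weighted integral, whereas you derive the integral inequality by hand and apply Hölder to the source term before closing with Grönwall; the two orderings yield the same final constant $C_4=\expon^{LT}$ up to lower-order corrections in $\Delta T$.
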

	\begin{proof}
		{Let us denote an }{arbitrary spatial }{norm of $\tilde{\bff}(t)$ in $\mathbb{R}^n$ by $\varepsilon(t):=\|\tilde{\bff}(t)\|$.}
		{Then,} based on Theorem~10.2 in \cite{Hairer_2000a} for $t\geq T_{n-1}$,
		{one can bound the error $\errffn$ by}
		\begin{equation}\label{bound_errffn_Lp}
		\|\errffn(t)\|\leq \expon^{L(t-T_{n-1})}\int_{T_{n-1}}^{t}\expon^{-L(s-T_{n-1})}\varepsilon(s)\mathrm{d}s,
		\end{equation}
		{since initially} at $T_{n-1}$ the error $\errffn$ {equals zero
			and is thus bounded, the} norm $\|\tilde{\bff}(t)\|$ is
		bounded by $\varepsilon(t),$ %
		and {the} function $\bar{\bff}$ is Lipschitz continuous with
		Lipschitz constant $L$, as stated in
		Assumption~\ref{ass:smooth_mean_rhs}. Taking $t=T_{n}$ in
		\eqref{bound_errffn_Lp} and using H\"{o}lder's inequality together
		with {a} Taylor expansion for $\Delta T$ {small, we obtain}
		\begin{align}
		\|\errffn(T_n)\| &\leq \expon^{L\Delta T}\int_{T_{n-1}}^{T_n}\left|\expon^{-L(s-T_{n-1})}\varepsilon(s)\right|\mathrm{d}s\nonumber\\
		&\leq \expon^{L\Delta T}\left(\int_{T_{n-1}}^{T_n}\left|\expon^{-L(s-T_{n-1})}\right|^q\mathrm{d}s\right)^{1/q} \left(\int_{T_{n-1}}^{T_n}|\varepsilon(s)|^p\mathrm{d}s\right)^{1/p}\nonumber\\
		&= \left(1+L\Delta T+\mathcal{O}\left(\Delta T^2\right)\right)\left[\Delta T+\mathcal{O}\left(\Delta T^2\right)\right]^{1/q}\|\varepsilon\|_{L^p(T_{n-1},T_n)}\nonumber\\
		&\leq C_{p}\Delta T^{1/q}+\mathcal{O}\left(\Delta T^{2/q}\right)\leq C_4 C_{p}\Delta T^{1/q}, \nonumber
		\end{align}
		with $q\geq1$ satisfying $1/p+1/q=1$ and the constant $C_4$ coming
		from the definition of the Landau symbol "big $\mathcal{O}$".
	\end{proof}
	
	We {can now prove a convergence result for our new Parareal
		algorithm for non-smooth input}
	\eqref{eq:section_Parareal_3_b_init_reduced}-\eqref{eq:section_Parareal_3_b_reduced}
	for problem
	\eqref{eq:section_Parareal_1_ODE}-\eqref{eq:section_Parareal_1},
	{which is} similar to {that of} Theorem~\ref{thm:section_Parareal_3}{, derived for the} %
	case of smooth RHS. {Like in Theorem
		~\ref{thm:section_Parareal_3}, we also assume that the time
		intervals have equal} length, $\Delta T = T/N$.
	\begin{thm}\label{thm:section_Parareal_5}
		Let Assumptions~\ref{ass:smooth_mean_rhs} and \ref{ass:small_perturb}
		be {satisfied, and assume that} $\mathcal{F}\big(T_n,
		T_{n-1},\bU^{(k)}_{n-1}\big)$ is the exact solution to
		\eqref{eq:section_Parareal_3_a_ODE}-\eqref{eq:section_Parareal_3_a} at
		$T_n$ with initial value $\bU^{(k)}_{n-1}.$ Furthermore,
		\begin{itemize}
			\item let $\bar{\mathcal{G}}\big(T_n,
			T_{n-1},\mathbf{U}^{(k)}_{n-1}\big)$ be an approximate solution to
			\eqref{eq:section_Parareal_3_a_reduced_ODE}-\eqref{eq:section_Parareal_3_a_reduced}
			with local truncation error bounded by $\bar{C_3}\Delta T^{l+1},$
			which can be expanded for $\Delta T$ small as
			\begin{equation}\label{eq:section_Parareal_4_a_reduced}
			\bar{\mathcal{F}}(T_n, T_{n-1}, \bU) - \bar{\mathcal{G}}(T_n, T_{n-1}, \bU) = 
			\bar{c}_{l+1}(\bU) \Delta T^{l+1} + \bar{c}_{l+2}(\bU) \Delta T^{l+2} +\dots
			\end{equation}
			with continuously differentiable functions $\bar{c}_{i},\ i=l+1,l+2,\dots,$ {and where $\bar{\mathcal{F}}\big(T_n,
				T_{n-1},\bU\big)$ denotes} the exact solution to
			\eqref{eq:section_Parareal_3_a_reduced_ODE} at $T_n,$ starting from
			{the} initial value $\bU;$
			\item assume $\bar{\mathcal{G}}$ satisfies the Lipschitz condition
			\begin{equation}\label{eq:section_Parareal_4_b_reduced}
			\|\bar{\mathcal{G}}\big(t+\Delta T, t, \bU\big)-\bar{\mathcal{G}}\big(t+\Delta T, t, \bV\big)\|\leq(1+C_2\Delta T)\|\bU-\bV\|
			\end{equation}
			for $t\in\mathcal{I}$ and for all $\bU$, $\bV$.
		\end{itemize}
		Then at iteration $k${, the new Parareal algorithm
			\eqref{eq:section_Parareal_3_b_init_reduced}-\eqref{eq:section_Parareal_3_b_reduced}
			satisfies the error} bound
		\begin{equation}\label{eq:section_Parareal_5_b}
		||\bu(T_n) - \bU_n^k|| \leq 
		\bar{C}_{1}^k\left[C_4C_{p}{\Delta T}^{(l+1)k+1/q}+\bar{C}_3 \left( {\Delta T}^{l+1} \right)^{k+1}\right]
		\frac{(1 + C_2 \Delta T)^{n-k-1}}{(k+1)!}\prod_{j = 0}^{k}(n - j)
		\end{equation}
		with the integer $q\geq1$ {defined by the relation} $1/p+1/q=1$, {constants $C_{p}$ and $C_4$ from Lemma~\ref{lemma:bound_errfn}, and $\bar{C_1}>0$ determined by the Lipschitz constant of $\bar{c}_{l+1}$ and the expansion \eqref{eq:section_Parareal_4_a_reduced}}.
	\end{thm}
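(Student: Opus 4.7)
The plan is to adapt the generating-function/induction argument of \cite{Gander_2008a} underlying Theorem~\ref{thm:section_Parareal_3}, carefully isolating the new low-regularity contribution carried by $\errffn$ from the classical Lipschitz-driven mechanism. Since $\mathcal{F}$ is exact, $\bu(T_n)=\mathcal{F}(T_n,T_{n-1},\bu(T_{n-1}))$; subtracting \eqref{eq:section_Parareal_3_b_reduced}, then adding and subtracting $\bar{\mathcal{G}}(T_n,T_{n-1},\bu(T_{n-1}))$ and using definition \eqref{eq:section_Parareal_7}, yields the identity
\begin{equation*}
\bu(T_n)-\bU_n^{(k+1)} = \errtot_n(T_n,\bu(T_{n-1})) - \errtot_n(T_n,\bU_{n-1}^{(k)}) - \bigl[\bar{\mathcal{G}}(T_n,T_{n-1},\bU_{n-1}^{(k+1)}) - \bar{\mathcal{G}}(T_n,T_{n-1},\bu(T_{n-1}))\bigr].
\end{equation*}
The crucial observation is that decomposing $\errtot_n$ via \eqref{errff} and invoking the Proposition preceding Lemma~\ref{lemma:bound_errfn}, which states that $\errffn$ does not depend on the initial value, makes the $\errffn$-contribution cancel in the first difference on the right, so only the smooth piece $\bar{\mathcal{F}}-\bar{\mathcal{G}}$ survives there.

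Combining this with the expansion \eqref{eq:section_Parareal_4_a_reduced}, the Lipschitz continuity of $\bar{c}_{l+1}$ (producing the constant $\bar{C}_1$), and \eqref{eq:section_Parareal_4_b_reduced}, one obtains for $k\geq 0$ the two-term recursion
\begin{equation*}
\|\bu(T_n)-\bU_n^{(k+1)}\| \leq \bar{C}_1\Delta T^{l+1}\|\bu(T_{n-1})-\bU_{n-1}^{(k)}\| + (1+C_2\Delta T)\|\bu(T_{n-1})-\bU_{n-1}^{(k+1)}\|.
\end{equation*}
The $\Delta T^{1/q}$ contribution in \eqref{eq:section_Parareal_5_b} therefore enters only through the base case $k=0$: here $\bU_n^{(0)}$ is generated by the sequential sweep \eqref{init_approx_reduced}, no subtraction removes $\errffn(T_n)$, and the inhomogeneity is bounded by Lemma~\ref{lemma:bound_errfn} together with \eqref{eq:section_Parareal_4_a_reduced} as $C_4 C_{p}\Delta T^{1/q}+\bar{C}_3\Delta T^{l+1}$. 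Telescoping the $k=0$ recursion from $\bU_0^{(0)}=\bu_0$ yields exactly the claimed bound at $k=0$.

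The remainder is an induction on $k$: unrolling the recursion over $n$ using $\bu(T_0)-\bU_0^{(k)}=0$ and substituting the inductive hypothesis reduces matters to the combinatorial identity $\sum_{j=1}^{n}\prod_{i=0}^{k}(j-1-i)=(k+1)!\binom{n}{k+2}=\tfrac{1}{k+2}\prod_{j=0}^{k+1}(n-j)$, which advances both the factorial and the falling-factorial product in \eqref{eq:section_Parareal_5_b} by one index. Multiplication by $\bar{C}_1\Delta T^{l+1}$ then distributes correctly over both source contributions, since $\Delta T^{l+1}\cdot\Delta T^{(l+1)k+1/q}=\Delta T^{(l+1)(k+1)+1/q}$ and $\Delta T^{l+1}\cdot(\Delta T^{l+1})^{k+1}=(\Delta T^{l+1})^{k+2}$, preserving the form of the bound.

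The main obstacle, and the conceptual heart of the argument, is the clean decoupling of the non-smooth contribution described above: one must verify that the $\Delta T^{1/q}$ factor coming from $\errffn$ enters exactly once, at the initial sweep, and is then transported unchanged through each Parareal iteration by the smooth Lipschitz mechanism. This is precisely what the Proposition preceding Lemma~\ref{lemma:bound_errfn} enables, since the $\bU$-independence of $\errffn$ forces cancellation in every iteration update beyond the first. Once this decoupling is in place, the remaining bookkeeping is a routine adaptation of the estimates from \cite{Gander_2008a}.
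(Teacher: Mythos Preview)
Your proposal is correct and follows essentially the same route as the paper's proof. Both arguments hinge on the same key observation: in the error identity for $\bu(T_n)-\bU_n^{(k+1)}$, the two occurrences of $\errffn(T_n)$ cancel because the Proposition shows $\errffn$ is independent of the initial value, leaving only the smooth $\bar{\mathcal{F}}-\bar{\mathcal{G}}$ expansion and the Lipschitz term; this yields exactly the two-term recursion you wrote, with the $\Delta T^{1/q}$ contribution entering solely through the $k=0$ sweep via Lemma~\ref{lemma:bound_errfn}. The only difference is cosmetic: the paper stops after establishing the recursion $e_n^{k+1}=\alpha e_{n-1}^k+\beta e_{n-1}^{k+1}$ and the base relation $e_n^0=\gamma+\beta e_{n-1}^0$ and then defers to the generating-function machinery of \cite{Gander_2008a}, whereas you carry out the induction on $k$ explicitly via the hockey-stick identity $\sum_{j=1}^n\prod_{i=0}^k(j-1-i)=\tfrac{1}{k+2}\prod_{j=0}^{k+1}(n-j)$, which is an equivalent bookkeeping of the same estimate.
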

	\begin{proof}
		{By adding and subtracting the same terms, we obtain from the
			new} Parareal update formula {for the error} {of}
		\eqref{eq:section_Parareal_3_b_reduced}
		\begin{align}
		\bu(T_n) - \bU_n^{(k+1)} &=
		\mathcal{F}\left(T_n, T_{n-1},\bu(T_{n-1})\right)
		-\mathcal{F}\left(T_n, T_{n-1},\bU_{n-1}^{(k)}\right)
		\nonumber\\
		&+\bar{\mathcal{G}}\left(T_n, T_{n-1},\bU_{n-1}^{(k)}\right)
		-\bar{\mathcal{G}}\left(T_n, T_{n-1},\bU_{n-1}^{(k+1)}\right)
		\nonumber\\
		&\pm\bar{\mathcal{F}}\left(T_n, T_{n-1},\bu(T_{n-1})\right)
		\pm\bar{\mathcal{G}}\left(T_n, T_{n-1},\bu(T_{n-1})\right)
		\pm\bar{\mathcal{F}}\left(T_n, T_{n-1},\bU_{n-1}^{(k)}\right)
		\nonumber\\
		&=\underbrace{\mathcal{F}\left(T_n, T_{n-1}, \bu(T_{n-1})\right)
			-\bar{\mathcal{F}}\left(T_n, T_{n-1},\bu(T_{n-1})\right)}_{=\errffn(T_n)}
		\nonumber\\
		&+\underbrace{\bar{\mathcal{F}}\left(T_n, T_{n-1},\bu(T_{n-1})\right)
			-\bar{\mathcal{G}}\left(T_n, T_{n-1},\bu(T_{n-1})\right)}_{=\bar{c}_{l+1}\left(\bu(T_{n-1})\right)\Delta T^{l+1}+\dots}
		\nonumber\\
		&-\underbrace{\bigg(\mathcal{F}\left(T_n, T_{n-1},\bU_{n-1}^{(k)}\right)
			-\bar{\mathcal{F}}\left(T_n, T_{n-1},\bU_{n-1}^{(k)}\right)\bigg)}_{=\errffn(T_n)}
		\nonumber\\
		&- \underbrace{\bigg(\bar{\mathcal{F}}\left(T_n, T_{n-1},\bU_{n-1}^{(k)}\right)
			- \bar{\mathcal{G}}\left(T_n, T_{n-1},\bU_{n-1}^{(k)}\right)\bigg)}_{=\bar{c}_{l+1}\left(\bU_{n-1}^{(k)}\right)\Delta T^{l+1}+\dots}
		\nonumber\\
		&+ \bar{\mathcal{G}}\left(T_n, T_{n-1},\bu(T_{n-1})\right)
		- \bar{\mathcal{G}}\left(T_n, T_{n-1},\bU_{n-1}^{(k+1)}\right).
		\end{align}
		Using {the} Lipschitz continuity of $\bar{c}_{l+1}$ and {the}
		Lipschitz condition \eqref{eq:section_Parareal_4_b_reduced}, we obtain
		the bound
		$$
		\|\bu(T_n) - \bU_n^{(k+1)}\|\leq \bar{C_1}\Delta T^{l+1}\|\bu(T_n) - \bU_{n-1}^{(k)}\|+(1+C_2\Delta T)\|\bu(T_n) - \bU_{n-1}^{(k+1)}\|
		$$
		{with a positive constant $\bar{C_1}$.}
		{In order to obtain a bound on the error, we now} consider the
		{corresponding} recurrence relation $e_{n}^{k+1}=\alpha
		e_{n-1}^{k}+\beta e_{n-1}^{k+1}$ with $\alpha=\bar{C_1}\Delta T^{l+1}$
		and $\beta=1+C_2\Delta T$. {Due to the initial guess from the
			coarse propagator} \eqref{init_approx_reduced}, the initial error
		{can be estimated for $n=1,\dots,N$ by}
		\begin{align}
		\|\bu(T_n) - \bU_n^{(0)}\|&=\|\mathcal{F}\left(T_n, T_{n-1}, \bu(T_{n-1})\right)-\bar{\mathcal{G}}\left(T_n, T_{n-1},\bU_{n-1}^{(0)}\right)\|\nonumber\\
		&\leq \|\mathcal{F}\left(T_n, T_{n-1}, \bu(T_{n-1})\right)-\bar{\mathcal{G}}\left(T_n, T_{n-1},\bu(T_{n-1})\right)\|\nonumber\\
		&+\|\bar{\mathcal{G}}\left(T_n, T_{n-1},\bu(T_{n-1})\right)-\bar{\mathcal{G}}\left(T_n, T_{n-1},\bU_{n-1}^{(0)}\right)\|\nonumber\\
		&\leq \|\mathcal{F}\left(T_n, T_{n-1}, \bu(T_{n-1})\right)-\bar{\mathcal{F}}\left(T_n, T_{n-1},\bu(T_{n-1})\right)\|\nonumber\\
		&+\bar{C_3}\Delta T^{l+1} + (1+C_2\Delta T)\|\bu(T_n) - \bU_{n-1}^{(0)}\|.\nonumber
		\end{align}
		{Now} Lemma~\ref{lemma:bound_errfn} gives us {a} bound for the
		first term {on the right-hand side} above{, and we thus obtain
			for the bounding initial recurrence relation}
		\begin{equation}\label{gamma}
		e_{n}^{0}=\gamma+\beta e_{n-1}^{0},\quad \gamma:=C_4C_{p}\Delta T^{1/q}+\bar{C_3}\Delta T^{l+1}.
		\end{equation}
		{We can now follow the same reasoning} as in \cite{Gander_2008a}
		{to obtain} the estimate \eqref{eq:section_Parareal_5_b}.
	\end{proof}
	\begin{cor}\label{cor_estimate}
		Let {the} assumptions of Theorem~\ref{thm:section_Parareal_5}
		be satisfied. If {$\tilde{\bff}\in L^\infty(\mathcal{I},\mathbb{R}^n)$}  %
		in Assumption~\ref{ass:small_perturb}, then the estimate
		\eqref{eq:section_Parareal_5_b} {becomes}
		\begin{equation}\label{eq:section_Parareal_5_b_infty}
		||\bu(T_n) - \bU_n^k|| \leq
		\bar{C}_{1}^k\left[C_4C_{\infty} {\Delta T}^{(l+1)k+1}+\bar{C}_3 \left( {\Delta T}^{l+1} \right)^{k+1}\right]
		\frac{(1 + C_2 \Delta T)^{n-k-1}}{(k+1)!}\prod_{j = 0}^{k}(n - j).
		\end{equation}
	\end{cor}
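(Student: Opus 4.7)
The plan is to observe that Corollary~\ref{cor_estimate} is essentially a direct specialization of Theorem~\ref{thm:section_Parareal_5} to the limiting case $p=\infty$, once Lemma~\ref{lemma:bound_errfn} is reinterpreted under the standard Hölder conjugacy convention $1/\infty + 1/1 = 1$, i.e.\ $q=1$. With this choice, the factor $\Delta T^{1/q}$ appearing in \eqref{bound_errffn+DeltaT} collapses to $\Delta T$, which accounts exactly for the change from exponent $(l+1)k + 1/q$ in \eqref{eq:section_Parareal_5_b} to $(l+1)k + 1$ in \eqref{eq:section_Parareal_5_b_infty}.

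First, I would re-examine the proof of Lemma~\ref{lemma:bound_errfn} in the case $\tilde{\bff}\in\boldsymbol{L}^\infty(\mathcal{I},\mathbb{R}^n)$. The bound \eqref{bound_errffn_Lp} still holds, and instead of applying Hölder with a general exponent, I would use the trivial estimate
\begin{equation*}
\int_{T_{n-1}}^{T_n} \expon^{-L(s-T_{n-1})}\varepsilon(s)\,\mathrm{d}s
\leq \|\tilde{\bff}\|_{\boldsymbol{L}^\infty(\mathcal{I},\mathbb{R}^n)}\int_{T_{n-1}}^{T_n}\expon^{-L(s-T_{n-1})}\,\mathrm{d}s,
\end{equation*}
then combine with the factor $\expon^{L\Delta T}$ and Taylor expand for small $\Delta T$ to obtain $\|\errffn(T_n)\|\leq C_4 C_\infty \Delta T$, where $C_\infty := \|\tilde{\bff}\|_{\boldsymbol{L}^\infty(\mathcal{I},\mathbb{R}^n)}$. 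This is precisely the conclusion of Lemma~\ref{lemma:bound_errfn} with $q=1$.

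Next, I would revisit the proof of Theorem~\ref{thm:section_Parareal_5}. The only place where the value of $q$ enters is in the definition of the initial recurrence constant $\gamma$ in \eqref{gamma}, which under the new bound becomes
\begin{equation*}
\gamma = C_4 C_\infty \Delta T + \bar{C}_3 \Delta T^{l+1}.
\end{equation*}
All subsequent steps of the recurrence analysis carried over from \cite{Gander_2008a} are independent of $q$, so substituting this new $\gamma$ into the final estimate yields exactly \eqref{eq:section_Parareal_5_b_infty}.

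The only subtle point, and really the only thing worth checking carefully, is the validity of the Hölder-type step in the limiting regime $p=\infty$; but this reduces to the elementary bound above, so no genuine obstacle arises. The rest is bookkeeping, and the corollary follows with essentially no new work beyond identifying $\Delta T^{1/q}\big|_{q=1}=\Delta T$.
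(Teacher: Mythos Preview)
Your proposal is correct and follows essentially the same approach as the paper. The paper also derives the key bound $\|\errffn(T_n)\|\leq C_4C_{\infty}\Delta T$ (via the closed-form expression $\frac{C_{\infty}}{L}(\expon^{L\Delta T}-1)$ obtained directly from Theorem~10.2 in \cite{Hairer_2000a}, which is exactly what your integral estimate evaluates to) and then simply invokes the proof of Theorem~\ref{thm:section_Parareal_5} with the modified $\gamma$, precisely as you outline.
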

	\begin{proof}
		{Using Theorem 10.2 from \cite{Hairer_2000a} and}  %
		{boundedness of the vector norm $\|\tilde{\bff}(t)\|\leq C_{\infty}$} on $\mathcal{I}${, we
			obtain the} bound
		\begin{equation}\label{bound_errffn}
		\|\errffn(t)\|\leq \frac{C_{\infty}}{L}\left(\expon^{L(t-T_{n-1})}-1\right),\quad t\leq T_{n-1}.
		\end{equation}
		For small $\Delta T$ {this} implies that there exists $C_4>0$ s.t.
		\begin{equation}\label{bound_errffn_Linfty}
		\|\errffn(T_n)\|\leq C_4C_{\infty}\Delta T,
		\end{equation}
		{and} following the proof of
		Theorem~\ref{thm:section_Parareal_5}{, we obtain} the estimate
		\eqref{eq:section_Parareal_5_b_infty}.
	\end{proof}
	\begin{rem}
		{From the convergence estimate \eqref{eq:section_Parareal_5_b}, we
			see that if} the norm $\|\tilde{\bff}\|_\nrmSpaceTime$
		in Assumption~\ref{ass:small_perturb} is small enough, then the second
		term in the estimate \eqref{eq:section_Parareal_5_b} will dominate
		{initially, and the convergence rate will be as} for the {original}
		Parareal algorithm, where coarse and fine propagators both solve the
		same problem. {This explains the key innovation in our new Parareal
			algorithm, namely to use a {suitable} smooth input $\bar{\bff}$ for our new
			coarse} propagator $\bar{\mathcal{G}}$, {in order to avoid} a
		considerable reduction of the Parareal convergence order.
	\end{rem}
	
	\section{Numerical {experiments for a model problem}}
	\label{section:numerical_test}
	
	{We now compare the performance of our new Parareal algorithm to
		the one of the original Parareal algorithm, and test the accuracy of
		our error estimates on the model of the RL-circuit shown} in
	Figure~\ref{RL_circuit_fig}.
	\begin{figure}
		\centering
		\includegraphics[width=0.4\linewidth]{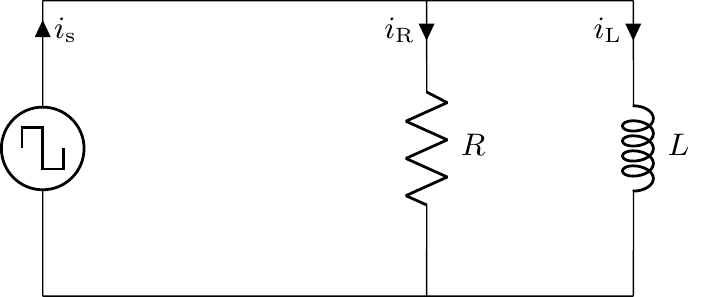}
		\caption{RL-circuit model.}
		\label{RL_circuit_fig}
	\end{figure}
	{The equations for this circuit are}
	\begin{equation}\label{RL_circuit_model}
	\begin{aligned} 
	\frac{1}{R}\phi^\prime(t)+\frac{1}{L}\phi(t)&= \pwm_{\numteeth{}}\left(t\right),\quad t\in(0,T],
	\\
	{\phi}(0) &= 0,
	\end{aligned} 
	\end{equation}
	where $R=0.01$\;$\Omega$ is the resistance, $L=0.001$\;H denotes the
	inductivity, $T=0.02$\;s is the period, and $\pwm_{\numteeth{}}$
	{is the} supplied PWM current source (in A) with $\numteeth{}$
	denoting {the} number of pulses, i.e.,
	\begin{equation}\label{input_pwm}
	\pwm_{\numteeth}(t)=\begin{cases}
	\mathrm{sign}\left[\sin\left(\dfrac{2\pi}{T} t\right)\right],\ & s_m(t)-\left|\sin\left(\dfrac{2\pi}{T}t\right)\right|<0,\\
	0,\ & \mathrm{otherwise,}
	\end{cases}
	\end{equation}
	where $s_m(t)=\dfrac{\numteeth}{T}t -
	\left\lfloor\dfrac{\numteeth}{T}t\right\rfloor,$ $t\in[0,T]$ is the
	common sawtooth pattern. In Figure~\ref{fig:fig_motor_voltages2}
	{we showed already} the PWM of switching frequency
	$f_s=\numteeth{}/T=500$ Hz, which consists of $\numteeth{}=10$
	pulses. {Note that the} values, which the depicted PWM signal
	attains, are {only} $-1,0,1$. Our numerical tests deal with
	{the} base frequency of 50 Hz and a modulation of $20$ kHz
	($m=400$), which is practically relevant in many applications in
	electrical engineering.
	
	\subsection{{Performance of the original Parareal algorithm}}
	
	{In the original Parareal algorithm,} both {the} fine and
	{the} coarse problem {use} the PWM signal \eqref{input_pwm}. The
	coarse propagator on each time {interval} is chosen to be the
	Backward Euler ({BE}) method of order $l=1$. For {a small number
		of processors, $N\ll m$, the coarse propagator will} not resolve the
	dynamics of the excitation, and therefore {the original
		convergence} arguments are not applicable:
	Theorem~\ref{thm:section_Parareal_3} is valid {only for $N$ large
		enough}, when the coarse propagator resolves all the pulses and the
	function is locally smooth{, and only in} this regime, one can
	expect that the high convergence rate {of the original Parareal
		algorithm} is maintained.  {This is illustrated in
		Figure~\ref{BWE_k1_pwm_pwm} for BE on the left},
	\begin{figure}[t]
		\centering
		\mbox{\includegraphics[width=0.475\linewidth]{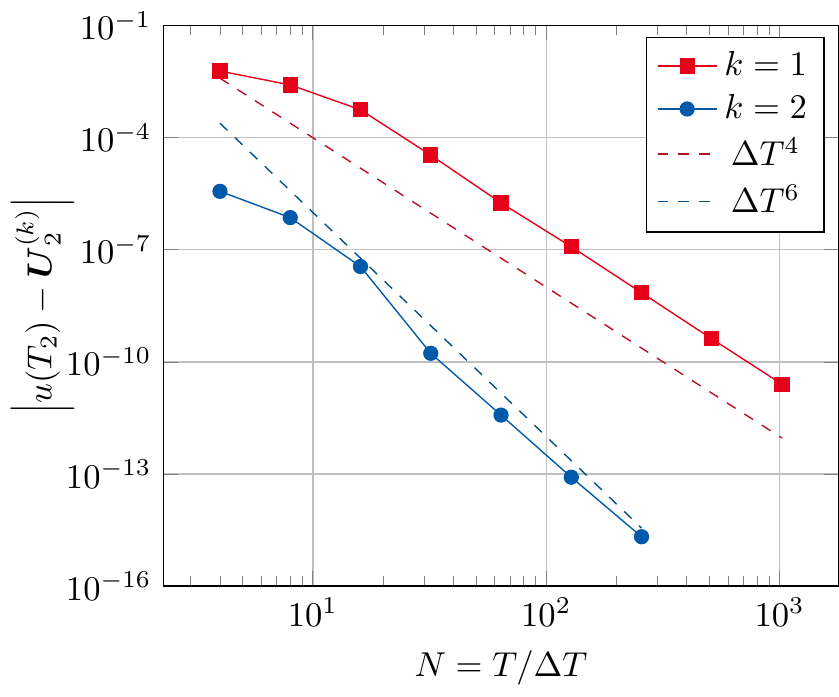}
			\hspace*{0.02\linewidth}
			\includegraphics[width=0.475\linewidth]{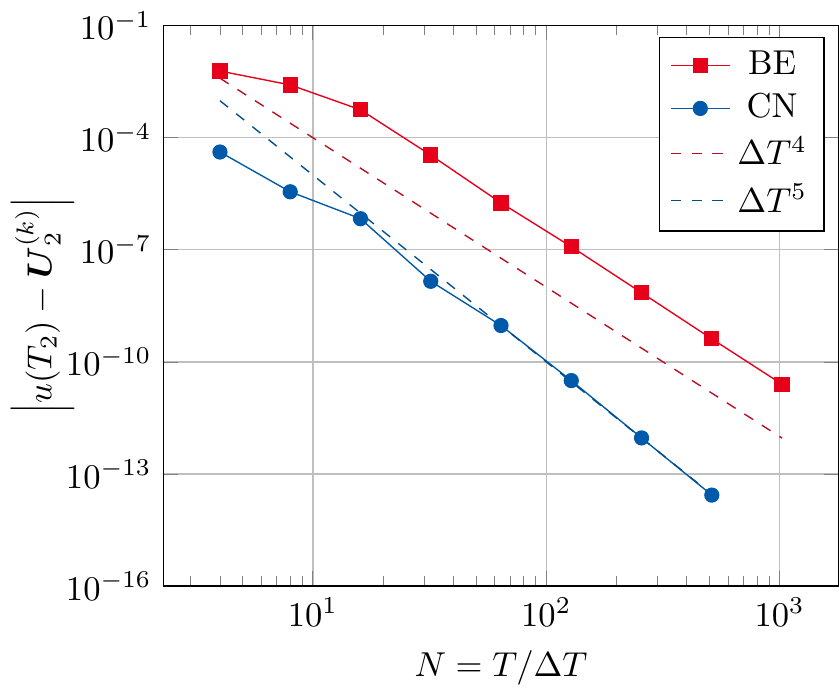}}
		\caption{{Dependence on $N$ of the convergence of the original
				Parareal algorithm. Left: for} $k=1,2$ using {BE, where we
				clearly see order reduction for $N<20$, and the asymptotic
				convergence order is only reached for larger $N$. Right:
				for $k=1$ using CN, where the order
				reduction remains also for larger $N$, in contrast to BE} {Note, BE for $k=1$
				is shown in both plots for reference.}}
		\label{BWE_k1_pwm_pwm}
	\end{figure}
	{where we see that for large $N$ we obtain 4th order convergence
		for $k=1$ and 6th order convergence for $k=2$ which matches the
		prediction $(l+1)(k+1)$ in \eqref{eq:section_Parareal_5} for {BE}
		of order $l=1$. {However,}  for small $N$ (less than 20), the convergence
		order is much lower. On the right in Figure~\ref{BWE_k1_pwm_pwm}, we
		show the corresponding results for the}
		Crank-Nicolson (CN) scheme, {which is of order $l=2$, and we
		iterate only once, $k=1$. Here we observe order reduction to order
		$5$ instead of the predicted order $6$ for smooth input, even for
		larger $N$.}

	\subsection{{Performance of the new Parareal algorithm}}
	
	{We now test our new Parareal algorithm using two} choices of
	input for the coarse propagator with reduced dynamics. On the one
	hand, one could make {the} naive choice of a step function
	\begin{equation}\label{input_step}
	\bar{f}_\mathrm{step}(t)=
	\begin{cases}
	1,\ &t\in[0,T/2),\\
	-1,\ &t\in[T/2,T]
	\end{cases} 
	\end{equation}
	on $[0,T]$. {This} is not globally smooth but piecewise, {which}
	suffices, {since} we consider in the following experiments only
	single step time stepping methods that restart at $T/2$.
	
	On the other hand, in power engineering, the PWM is commonly used as a
	cheap surrogate for sinusoidal excitation. Therefore, its first and
	dominant harmonic, i.e., the sine wave
	\begin{equation}\label{input_sine}
	\bar{f}_\mathrm{sin}(t)=\sin\left(\frac{2\pi}{T}t\right),\quad t\in[0,T],
	\end{equation}
	is {a more} reasonable choice for the coarse problem. The IVP with
	reduced dynamics for {our model problem} is defined by
	\begin{equation}\label{RL_circuit_model_reduced}
	\begin{aligned} 
	\frac{1}{R}\phi^\prime(t)+\frac{1}{L}\phi(t)&=\bar{f}(t),\quad t\in(0,T],
	\\
	{\phi}(0) &= 0
	\end{aligned} 
	\end{equation}
	with $\bar{f}$ being one of the functions in \eqref{input_step} or
	\eqref{input_sine}. The coarse propagator $\bar{\mathcal{G}}$ will
	solve the problem \eqref{RL_circuit_model_reduced}, while the fine
	propagator $\mathcal{F}$ {will solve the original problem}
	\eqref{RL_circuit_model}. The non-smooth part of the input is then
	given by
	\begin{equation} \label{pertubr_pwm_sine}
	\tilde{\f}_{\numteeth}(t):=\pwm_{\numteeth{}}(t)-\bar{f}(t).
	\end{equation}%
	Clearly, $|\tilde{\f}_{\numteeth}(t)|\in
	L^\infty(0,T)${, and Corollary \ref{cor_estimate} gives us the
		error estimate for our new Parareal algorithm}
	\eqref{eq:section_Parareal_3_b_init_reduced}-\eqref{eq:section_Parareal_3_b_reduced}
	in this case.
	
	{We show in Figure~\ref{fig:BWE_k1_pwm_sin} a comparison of the
		convergence behavior of the new Parareal algorithm using BE for
		$k=1$ and $k=2$ iterations using the two different choices of
		reduced input dynamics.}
	\begin{figure}[t]
		\centering
		\mbox{\includegraphics[width=0.475\linewidth]{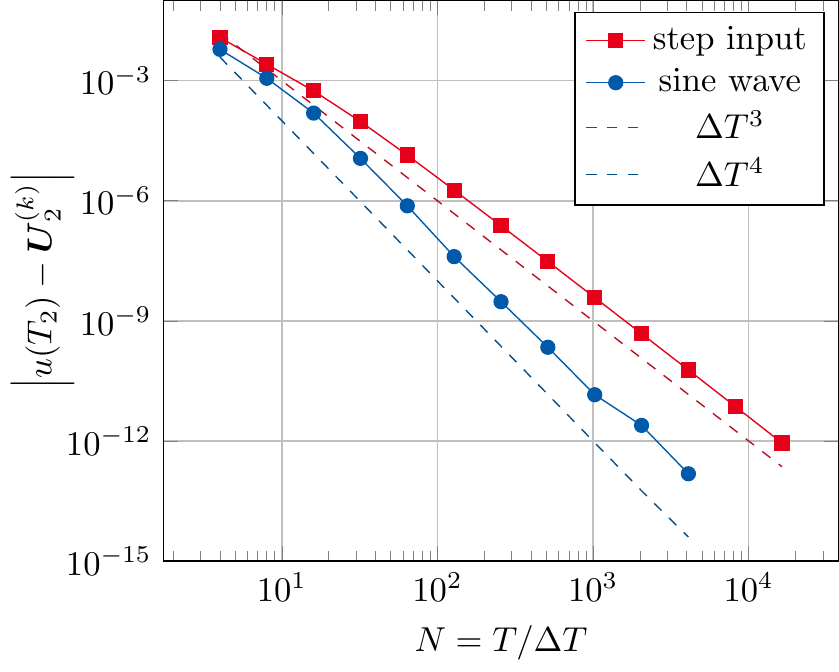}
			\hspace*{0.02\linewidth}
			\includegraphics[width=0.475\linewidth]{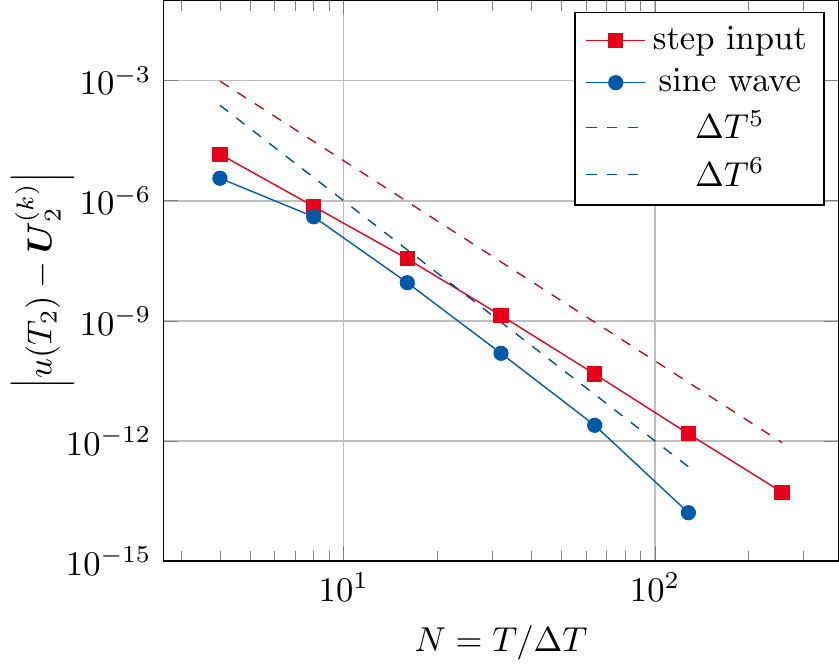}}
		\caption{{Dependence on $N$ of the convergence of the new
				Parareal algorithm using BE and the coarse propagators with
				reduced dynamics \eqref{input_step} and
				\eqref{input_sine}. Left: for $k=1$. Right: for $k=2$.}}
		\label{fig:BWE_k1_pwm_sin}
	\end{figure}%
	{We see that in both cases when the reduced dynamics {of} the step
		function $\bar{f}_\mathrm{step}$ in \eqref{input_step} is used for
		the coarse propagator, one obtains an order reduction{:} for $k=1$ we
		get third order, and for $k=2$ we get fifth order, which} matches
	the theoretical predictions because the lower order term in
	\eqref{eq:section_Parareal_5_b_infty} has order $(l+1)k+1=3$ {for
		$k=1$ and $(l+1)k+1=5$ for $k=2$}. On the other hand, convergence of
	order $(l+1)(k+1)=4$ {for $k=1$ (left) and $(l+1)(k+1)=6$ for $k=2$
		(right)} is observed for the {coarse} sine input $\bar{f}_\mathrm{sin}$, given
	in \eqref{input_sine}{, which means that indeed} the second term
	$\bar{C_3}\left(\Delta T^{l+1}\right)^{k+1}$ in {our}
	estimate~\eqref{eq:section_Parareal_5_b_infty} is dominant over the
	first one. Hence, the sinusoidal function appears to be a well-chosen
	reduced dynamics {for} the coarse problem, which does not slow down
	the convergence of the Parareal algorithm, as the bound in
	\eqref{eq:section_Parareal_5} gives the same rate.
	
	{We next test CN with our new Parareal algorithm. For one
		iteration, $k=1$, we show in Figure~\ref{fig:CN_k1_pwm_sin}}
	{how in this case the step input function $\bar{f}_\mathrm{step}$
		also gives order reduction, we only observe 4th order convergence,
		which is in good agreement with our convergence estimate since} the
	first term in \eqref{eq:section_Parareal_5_b_infty} is of order
	$(l+1)k+1=4$, whereas with the sine input function
	$\bar{f}_\mathrm{sin}$ we get as expected the full 6th order
	convergence.
	\begin{figure}[t]
		\centering
		\includegraphics[width=0.5\linewidth]{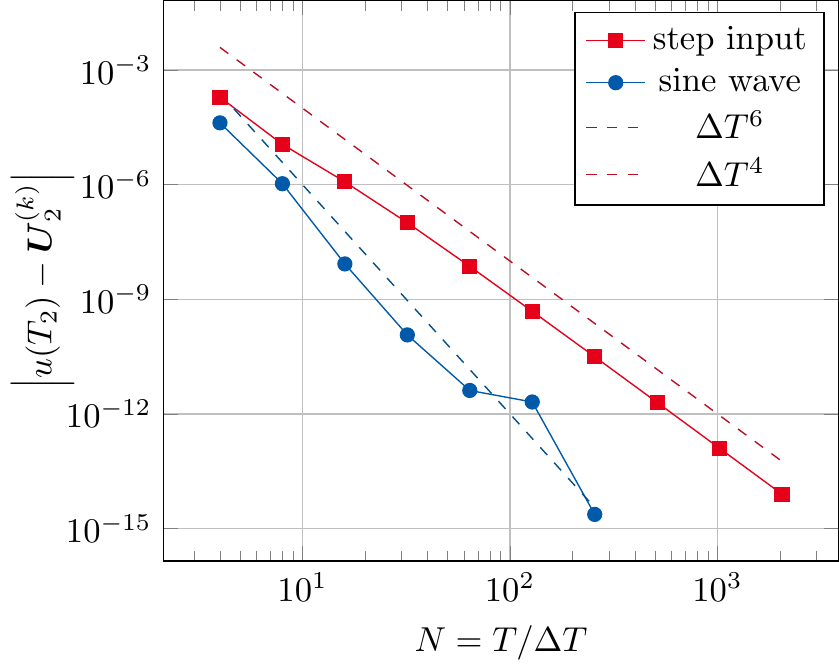}
		\caption{{Dependence on $N$ of the convergence of the new
				Parareal algorithm using CN and the coarse propagators with
				reduced dynamics \eqref{input_step} and \eqref{input_sine} for
				$k=1$.}}
		\label{fig:CN_k1_pwm_sin}
	\end{figure}
	
	\section{Application to an induction machine}\label{Sec:Application}
	
	Due to the low-frequency operating regime of electrical machines,
	their simulation is usually performed assuming that {the} displacement
	current density is negligible with respect to the other current
	densities \cite{Schmidt_2008aa}{, and one} derives a
	parabolic-elliptic initial-boundary value problem from Maxwell's
	equations \cite{Jackson_1998aa}. {This} is called the eddy current
	problem and it reads in terms of the magnetic vector potential
	$\vec{A}:\Omega\times\mathcal{I}\to\mathbb{R}^3$
	\begin{align}
	\sigma\partial_t{\vec{A}}(\vec{r},t)
	+
	\nabla \times \bigl(\nu\nabla\times\vec{A}(\vec{r},t)\bigr) &= \vec{J}{_{\text{src}}}(\vec{r},t) &&\text{{in}}\ \Omega\times\mathcal{I},\label{eq:mqs1}    \\
	\vec{n}\times\vec{A}|_\Gamma &= 0 &&\text{{on}}\ \Gamma{\times\mathcal{I}},\label{eq:mqs1_bdry} \\
	\vec{A}(\vec{r},t_0)&=\vec{A}_{0}(\vec{r}), &&\vec{r}\in\Omega{,}\label{eq:mqs1_init}
	\end{align}
	where $\Omega$ represents the spatial domain of the machine,
	{consisting of a rotor, a stator,} and the air gap in between,
	{$\Gamma=\partial\Omega$ denotes its boundaries,} and
	$\mathcal{I}:=(t_0,t_{\text{end}}]$ is the time interval. The geometry
	is encoded {in} the scalar-valued electric conductivity
	$\sigma=\sigma(\vec{r})\geq0$ and the magnetic reluctivity
	$\nu=\nu(\vec{r},\|\nabla\times\vec{A}\|)>0$. The source current
	density
	$$\vec{J}{_{\text{src}}}=\sum\limits_{{{s}}=1}^{{n_\mathrm{src}}}\vec{\chi}_{{s}}i_{{s}}$$
	impresses lumped currents due to an attached electric network in
	terms of the winding functions
	$\vec{\chi}_{{s}}:\Omega\to\mathbb{R}^3$ which homogeneously
	distribute the currents $i_{{s}}:\mathcal{I}\to\mathbb{R}$ among
	{$n_\mathrm{src}=3$} stranded conductors \cite{Schops_2013aa}, since we
	deal with a three-phase excitation within this application. The
	electric circuit establishes a relation between the current $i_{{s}}$
	and the voltage
	\begin{equation}\label{windFunc}
	v_{{s}}(t)=R_{{s}} i_{{s}}(t)+\int_\Omega \vec{\chi}_{{s}}(\vec{r})\cdot\partial_t{\vec{A}(\vec{r},t)}\;\mathrm{d}\Omega, %
	\end{equation}
	with ${s}=1,2,3$ and $R_{{s}}$ denoting the {direct current} (DC) resistance of the ${{s}}$-th
	stranded conductor.
	
	Furthermore, in order to include {the} rotation of the motor, the
	equation of motion is additionally considered{: the} movement is
	represented in the mesh by the moving band approach
	\cite{Ferreira-da-Luz_2002aa}.  The angular velocity of the rotor,
	\begin{equation}
	\label{eq:rotorAngle}
	\displaystyle\omega(t)=\mathrm{d}_t\theta(t),\quad t\in\mathcal{I},
	\end{equation}
	with a given initial rotor angle $\theta(t_0)=\theta_0$ can be
	determined via
	\begin{align}
	I\mathrm{d}_t\omega+{C\omega}&=T_{\text{mag}}(\vec{A})\quad {{\text{in}\ \mathcal{I}}},\label{eq:motionEqn_omega}\\
	\omega(t_0)&=\omega_0, \label{eq:motionEqn_init}
	\end{align}
	where $I$ is the moment of inertia, and {$C$ is the friction}
	coefficient. System
	\eqref{eq:motionEqn_omega}-\eqref{eq:motionEqn_init} is excited with
	the torque $T_{\text{mag}},$ which is defined on the boundary of the
	air gap.
	
	We consider in the following a {two-dimensional} (2D) computational domain
	$\Omega_\text{2D}\subset\mathbb{R}^2,$ which represents the
	cross-section of the electrical machine. The reduction to the
	2D-setting and discretization of \eqref{eq:mqs1}-\eqref{eq:mqs1_init}
	{using} finite elements with {$n_\text{a}$} degrees of freedom gives
	together with \eqref{windFunc}, \eqref{eq:rotorAngle}, and
	\eqref{eq:motionEqn_omega} an IVP for a coupled system of differential-algebraic equations (DAEs) of the form
	\begin{align}\label{eq:daes3}
	\mathbf{M}\mathrm{d}_t\mathbf{u}(t)
	+
	\mathbf{K}\bigl(\mathbf{u}(t))\mathbf{u}(t)
	&=
	\mathbf{f}(t),\quad {t\in\mathcal{I}},\\
	\mathbf{u}(t_0)&=\mathbf{u}_0,\label{eq:daes3_init}
	\end{align}
	with unknown
	$\mathbf{u}^{\!\top}=[\mathbf{a}^{\!\top},\mathbf{i}^{\!\top},\theta,\omega]:\mathcal{I}\to\mathbb{R}^{n}$
	and the initial condition $\mathbf{u}_0\in\mathbb{R}^{n}$. At each
	point $t$ in time{,} $\mathbf{a}(t)\in\mathbb{R}^{n_\text{a}}$ is the
	vector of (line-integrated) magnetic vector potentials,
	$\mathbf{i}(t)\in\mathbb{R}^{3}$ represents the currents of
	the three phases, $\theta(t)\in\mathbb{R}$ denotes the rotor angle,
	and $\omega(t)\in\mathbb{R}$ is the rotor's angular velocity ($n=n_\text{a}+5$). The
	differential-algebraic nature of the system \eqref{eq:daes3}
	originates from the fact that $\mathbf{M}$ inherits the singularity
	from the finite element conductivity matrix due to the presence of
	non-{conducting} materials in the domain, i.e., where $\sigma=0$. The
	right-hand side $\mathbf{f}(t)$ consists of given voltages
	$\mathbf{v}(t)\in\mathbb{R}^3$ and the mechanical excitation. We refer
	to \cite{Gyselinck_2001aa} for details. Finally, the time-dependent
	problem \eqref{eq:daes3}-\eqref{eq:daes3_init} {has} to be solved via
	application of a time integrator.
	
	\begin{rem}
		We note that the differential-algebraic equation \eqref{eq:daes3} is
		not covered by {our analysis} for ordinary differential
		equations, but we do not expect difficulties for the case of index-1
		problems due the reasoning in \cite{Schops_2018aa}.
	\end{rem}
	
	\subsection{Numerical model}
	\begin{figure}[t]
		\centering
		\includegraphics[width=0.5\linewidth]{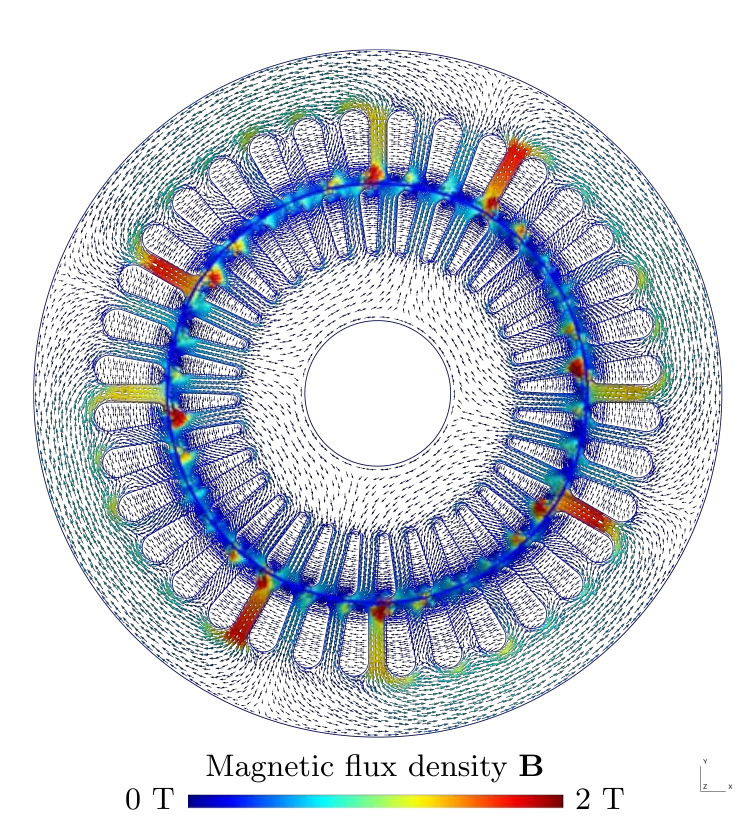}
		\caption{Magnetic field of the four-pole induction machine model
			'im\_3kw' \cite{Gyselinck_2001aa} at time instant $t=0.02$ s if
			excited by a sinusoidal voltage excitation. The numerical
			simulation with GetDP \cite{Geuzaine_2007aa} considers only a
			quarter of the machine geometry with periodic boundary
			conditions.}
		\label{fig:machine_field_plot}
	\end{figure}
	We will now illustrate {the} performance of our {new} Parareal
	algorithm
	\eqref{eq:section_Parareal_3_b_init_reduced}-\eqref{eq:section_Parareal_3_b_reduced}
	for the semi-discrete eddy current problem
	\eqref{eq:daes3}-\eqref{eq:daes3_init}, supplied with a three-phase
	PWM voltage source.  As a {concrete} example we consider a
	four-pole squirrel-cage induction motor, illustrated in
	Figure~\ref{fig:machine_field_plot}, and carry out the computations under no-load operation
	condition. {The} simulation of the 2D machine model was performed
	using the GetDP library \cite{Geuzaine_2007aa} using {$n=4400$} degrees of freedom.  The machine is
	supplied with a three-phase PWM voltage source of $20$ kHz, which
	corresponds to $m=400$ pulses on the time interval $[0,\;0.02]$
	s{,} and is practically relevant for numerous applications in
	electrical engineering.
	{For} $t\in\mathcal{I}$ and ${s}=1,2,3$, the excitation (in V)
	with $m$ pulses {is given} by
	\begin{equation}
	\label{input_pwm_standard}
	v^{\numteeth}_{{s}}(t)
	=
	\mathrm{sign}\left[\sin\left(\dfrac{2\pi}{T} t+\varphi_{{s}}\right)-b_m(t)\right],
	\end{equation}
	where $\varphi_{{s}}$ denotes one of the three phases $\varphi_1=0,$ $\varphi_2=-2/3\pi,$ $\varphi_3=-4/3\pi,$ and 
	\begin{equation}
	b_m(t)=2\left(\dfrac{\numteeth}{T}t - \left\lfloor\dfrac{\numteeth}{T}t\right\rfloor\right)-1
	\end{equation}
	is determined by the bipolar trailing-edge modulation using a sawtooth carrier \cite{Sun_2012aa}.
	
	We consider $T=0.02$ s to be the electric period, which corresponds to
	{a} frequency of $50$ Hz. As an example of the voltage source, a
	PWM signal $v_1^{100}$ of $5$ kHz (corresponding to $m=100$ pulses on
	$[0,0.02]$ s) is {shown} in Figure~\ref{fig:vltg_pwm}.
	\begin{figure}[t]
		\centering
		\includegraphics[width=0.5\linewidth]{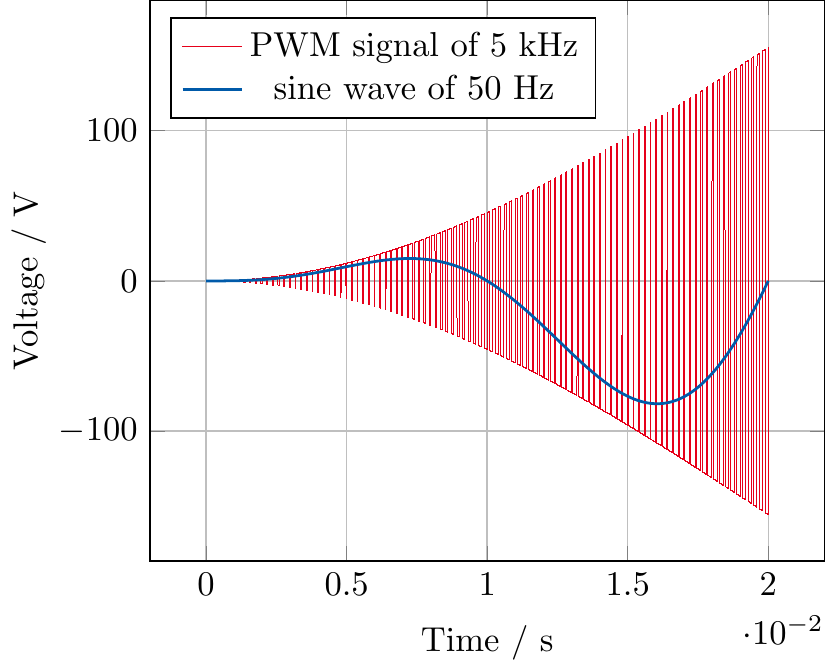}
		\caption{PWM voltage source of $5$ kHz with a {ramp-up} and {phase 1 of the corresponding sinusoidal waveform of $50$ Hz}.}
		\label{fig:vltg_pwm}
	\end{figure}
	An initial {ramp-up} of the applied voltage was {used} for
	reducing the transient behavior of the motor{, as it was proposed}
	by the original authors of the model \cite{Gyselinck_2001aa}. {Phase 1 of the three-phase sinusoidal voltage source of $50$ Hz is shown in Figure~\ref{fig:vltg_pwm}. This waveform will be used as an input for the reduced coarse problem within our new Parareal method.}
	
	The current waveforms, obtained by solving the DAE~\eqref{eq:daes3}
	excited by {the PWM signal of $20$ kHz} are {shown} in
	Figure~\ref{fig:current_pwm}.
	\begin{figure}[t]
		\centering
		\includegraphics[width=0.5\linewidth]{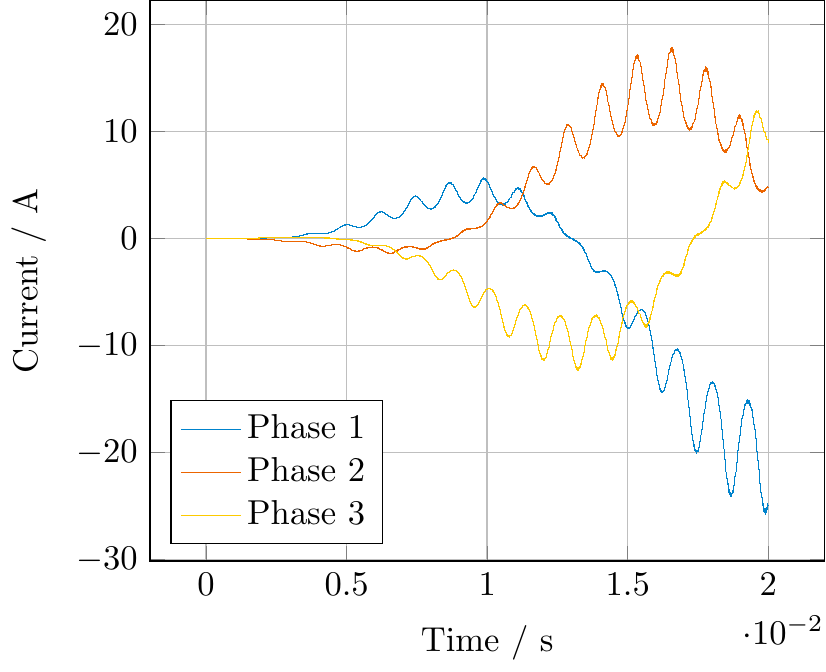}
		\caption{Stator current waveforms for the three-phase PWM voltage source {of $20$ kHz}.}
		\label{fig:current_pwm}
	\end{figure}
	The waveform has multiharmonic characteristics: one observes three
	different time scales: the underlying sinusoidal excitation (50 Hz),
	an additional sinusoidal behavior due to `slotting' of the machine and
	finally the {small} high{-}frequen{cy} oscillations (`ripples') in
	the current waveforms due to the PWM excitations ($20$ kHz). The
	consideration of all those frequencies may be for example important if
	an engineer is concerned with {the} acoustic design of the machine.
	
	\subsection{Application of {the new Parareal algorithm}}
	
	The {new} Parareal algorithm
	\eqref{eq:section_Parareal_3_b_init_reduced}-\eqref{eq:section_Parareal_3_b_reduced}
	was implemented in GNU Octave \cite{Eaton_2015aa} and uses OpenMP
	parallelized calls of GetDP. It was executed on an Intel Xeon cluster
	with $80\times2.00$ GHz cores and 1TB DDR3 memory.
	
	The reduced coarse propagator $\bar{\mathcal{G}}$ solves
	\eqref{eq:daes3}-\eqref{eq:daes3_init} with the input three-phase
	voltage
	\begin{equation}
	\label{input_sin}
	\bar{v}_{{s}}(t)=\sin\left(\dfrac{2\pi}{T} t+\varphi_{{s}}\right),\quad {s}=1,2,3,
	\end{equation}
	for $t\in\mathcal{I}=[0,\;0.02],$ {shown} in
	Figure~\ref{fig:voltage_sin}.
	\begin{figure}[t]
		\centering
		\includegraphics[width=0.5\linewidth]{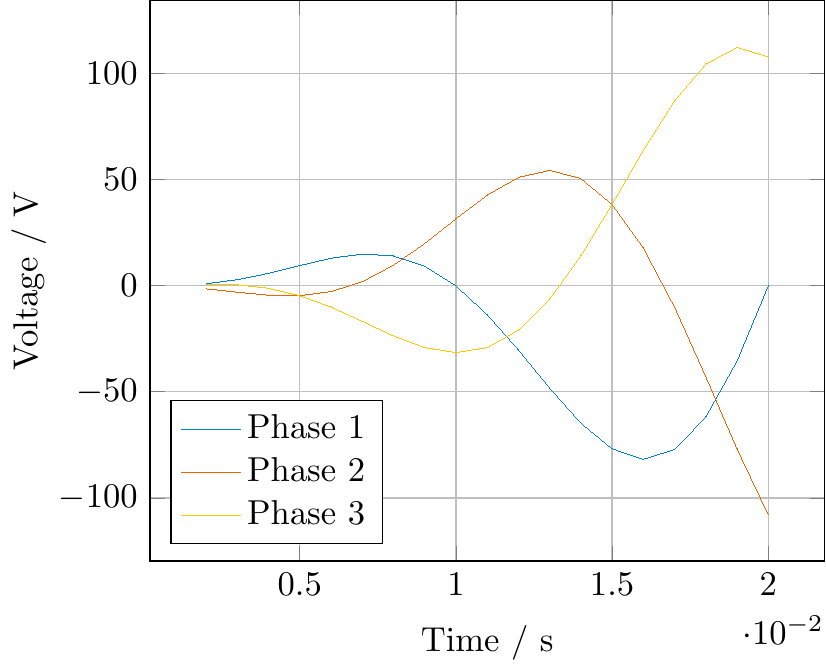}
		\caption{Three-phase sinusoidal voltage source {of $50$ Hz}, used as an input for the
			coarse propagator with reduced dynamics {in our new Parareal algorithm. The c}oarse discretization is
			obtained with the time step $\Delta t=10^{-3}$.}
		\label{fig:voltage_sin}
	\end{figure}
	The fine solver $\mathcal{F}$ uses the original PWM input $v_{{s}}^{400}$,
	${{s}}=1,2,3$ from \eqref{input_pwm_standard}. Both propagators solve the
	IVP {using the Backward Euler} {method} with the time step sizes $\Delta
	t=10^{-3}$ s and $\delta t=10^{-6}$ s for the coarse and the fine
	problem, respectively. We have used $N=20$ cores within the new
	Parareal simulation{, and for comparison we also simulated the
		machine} with the {original Parareal algorithm}
	\eqref{eq:section_Parareal_3_b_init}-\eqref{eq:section_Parareal_3_b}.
	In {the original Parareal algorithm} both coarse and fine problems
	have the same PWM voltage input $v_{{s}}^{400},$ ${s}=1,2,3$ and {we} use
	the {same time} step sizes $\Delta t$ and $\delta t$ defined above.
	
	In order to evaluate the convergence of the {original and the new
		Parareal algorithms, we used} the error norm from \cite[Chapter
	II.4]{Hairer_2000a}, i.e.{,} the vector $\mathbf{u}$ is considered
	close to $\mathbf{v}\in\mathbb{R}^n$ if
	\begin{equation}\label{error_definition}
	\text{err}=\sqrt{\frac{1}{n}\sum_{i=1}^n \frac{|u_i-v_i|^2}{\left(\text{atol} +\text{rtol}\,|v_i|\right)^2}}<1{,}
	\end{equation}
	where $\text{atol}=\text{rtol}=1.5\cdot10^{-5}$ are prescribed
	absolute and relative tolerances. The error norm
	\eqref{error_definition} is applied {to} each jump {at} the $N-1$
	synchronization points. The Parareal iteration is terminated if the
	mismatch of the biggest jump, {measured by
		\eqref{error_definition}}, is below $1$.
	
	The numerical results {in} Figure~\ref{fig:err_parareal_reduced}
	show that the {new Parareal algorithm} with {well-chosen}
	reduced coarse input works very well in practice{: at each
		iteration, it is about one order of magnitude more accurate, and
		also needs in this example 25\% less iterations} than the
	{original} Parareal algorithm {to converge to the desired
		tolerance, capturing well} all relevant frequencies of the
	multiharmonic solution.
	\begin{figure}[t]
		\centering
		\includegraphics[width=0.5\linewidth]{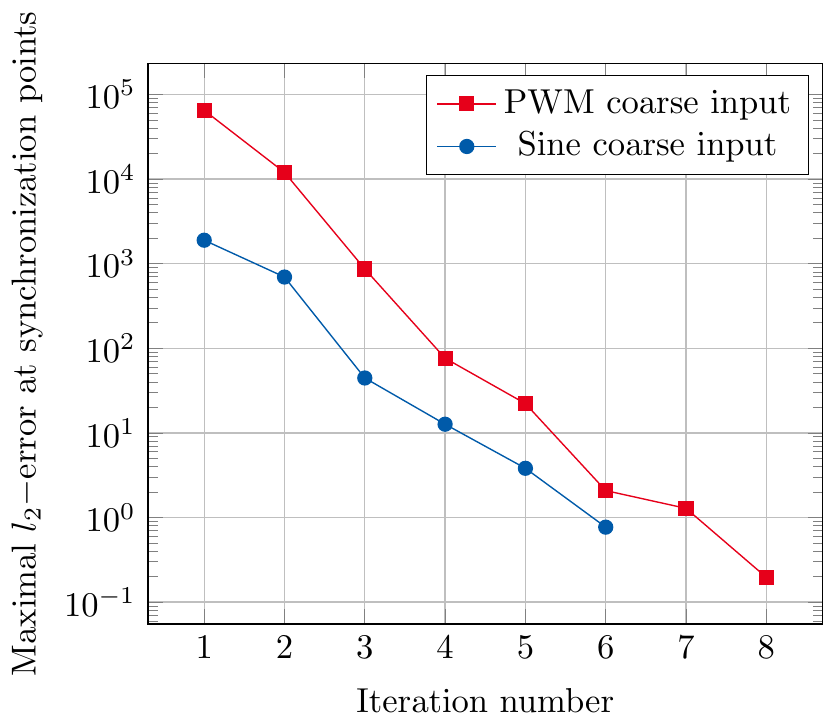}
		\caption{{Comparison of the convergence of the original and
				the new Parareal algorithms}.}
		\label{fig:err_parareal_reduced}
	\end{figure}
	
	\section{Conclusions} \label{section:conclusions}
	
	{{In this paper} we proposed a new Parareal algorithm for} problems
	which involve discontinuous inputs. {Our new Parareal algorithm
		uses }{a} smooth part of the source {representing reduced
		dynamics} for the coarse propagator{, which is} suitable for
	coarse discretization in time. {We analyzed the new Parareal
		algorithm and derived precise error estimates, which show that order
		reduction is possible if the coarse model is not good enough}. In
	particular, if the chosen non-smooth part {$\tilde{\bff}$} of the input belongs to
	$L^p(\mathcal{I},\mathbb{R}^n)$ with $p\geq 1$, and {a} time integrator of order $l$ is
	used as {the} coarse propagator for the reduced problem, {we
		proved that the order reduction can be} at most $l+1/p$. However, if
	the corresponding input with reduced dynamics is a good approximation
	to the original (discontinuous) input, i.e., they are close in the
	sense of the $L^p-$norm, then the {new Parareal algorithm with a
		coarse propagator using large time steps reaches the same order} as
	the {original} Parareal algorithm {that would need a coarse
		propagator with very small time steps. We illustrated the accuracy
		of our estimates with numerical experiments on} an RL-circuit model
	with PWM signal as input{, and we also tested the new Parareal
		algorithm on an} eddy current problem, describing the operation of
	an induction machine. The new Parareal algorithm {is about an order
		of magnitude more accurate in each iteration than the original one,
		and reaches} a prescribed tolerance {in $25\%$ less iterations
		in the eddy current example.}

\end{document}